        \pgfplotsset{compat = 1.3}
        \pgfplotsset{minor grid style={dotted}} \pgfplotsset{major grid
        style={dashed}}
        \pgfplotsset{every x tick label/.append style={font=\footnotesize,
        yshift=0.25ex}}
        \pgfplotsset{every y tick label/.append
        style={font=\footnotesize, xshift=0.25ex}}
\newtheorem{cor}{Corollary}
\newtheorem{theo}{Theorem}
\newtheorem{rem}{Remark}
\def\bk{\mathbf{k}}
\def\bx{\mathbf{x}}
\def\bg{\mathbf{g}}
\def\bh{\mathbf{h}}
\def\bu{\mathbf{u}}
\def\bU{\mathbf{U}}
\def\bw{\mathbf{w}}
\def\mca{\mathcal{A}}
\def\mcq{\mathcal{Q}}
\def\mce{\mathcal{E}}
\def\pmatrix{\left(\begin{array}}
\def\endpmatrix{\end{array}\right)}
\def\dd{\mathrm{d}}
\def\sech{\mathrm{sech}}
\def\sech{\mathrm{sech}}
\newcommand*{\QEDB}{\hfill\ensuremath{\square}}
\newcommand{\qed}{\tag*{$\square$}}
\newcommand{\Dc}{D^{(c)}}
\author{G.\,Frasca-Caccia\,\quad P.\,E.\,Hydon\,\\[.5cm]
\small
School of Mathematics, Statistics and Actuarial Science\\
\small
University of Kent, Canterbury, CT2 7NZ\\
\small}
\begin{document}
\title{A new technique for preserving conservation laws}

\author{
{\sc G.\,Frasca-Caccia,\,\quad P.\,E.\,Hydon
} \\[2pt]
School of Mathematics, Statistics and Actuarial Science\\
University of Kent, Canterbury, CT2 7FS, UK}

\maketitle

\begin{abstract} {This paper introduces a new symbolic-numeric strategy for finding semidiscretizations of a given PDE that preserve multiple local conservation laws. We prove that for one spatial dimension, various one-step time integrators from the literature preserve fully-discrete local conservation laws whose densities are either quadratic or a Hamiltonian. The approach generalizes to time integrators with more steps and conservation laws of other kinds; higher-dimensional PDEs can be treated by iterating the new strategy. We use the Boussinesq equation as a benchmark and introduce new families of schemes of order two and four that preserve three conservation laws. We show that the new technique is practicable for PDEs with three dependent variables, introducing as an example new families of second-order schemes for the potential Kadomtsev-Petviashvili equation.

\textbf{Keywords:} {Finite difference methods \and Conservation laws \and Boussinesq equation \and pKP equation \and Invariant conservation}

\textbf{AMS class:} \textit{65M06; 37K05; 39A14.}
}
\end{abstract}

\section{Introduction}
\label{intro}
Consider a system of $q$ partial differential equations (PDEs),
\begin{equation}\label{PDE}
\mca(x,t,[\bu])=\mathbf{0},
\end{equation}
where $\mca$ is a row vector, $\bu$ has components $u^\alpha,\ \alpha=1,\dots,q$, and square brackets around a differentiable expression denote the expression and finitely many of its derivatives\footnote{To simplify the presentation, we consider here only one spatial variable, $x$. The extension to PDEs with more spatial variables is outlined in Section~\ref{genericCL} and illustrated in Section~\ref{pKPsec}.}.
We assume that (\ref{PDE}) is totally nondegenerate (see \cite{olverbook}). A local conservation law is a divergence expression,
\begin{equation}\label{CLaw} 
\mbox{Div}\, \mathbf{F}=D_x\{F(x,t,[\bu])\}+D_t\{G(x,t,[\bu])\},
\end{equation}
that vanishes on all solutions of \eqref{PDE}. The functions $F$ and $G$ are the flux and the density of the conservation law, respectively, and $D_x$ and $D_t$ denote the total derivatives with respect to $x$ and $t$, respectively. The conservation law \eqref{CLaw} is in characteristic form if there exists a column vector $\mcq$ such that
\begin{equation}\label{CLchar}
\mbox{Div}\,\mathbf{F}=\mca\mcq,
\end{equation}
in which case $\mcq$ is called the characteristic. The space of total divergences forms the kernel of the Euler operator, $\mce$, whose $\alpha$-th entry is
\begin{equation}\label{Eul}
\mce_\alpha=\sum_{i,j}(-D_x)^i(-D_t)^j\frac{\partial}{\partial(D_x^iD_t^ju^\alpha)}.
\end{equation} 
Hence
\begin{equation}\label{eulcond}
\mce(\mca\mcq)=\mathbf{0}
\end{equation}
if and only if there exists $\mathbf{F}$ such that \eqref{CLchar} holds. These results generalize immediately to PDE systems with more than two independent variables.

The literature on the numerical solution of PDEs is rich in numerical methods that preserve global invariants, but there are relatively few results on the preservation of local conservation laws. Arguably, local conservation laws are more necessary: they hold throughout the domain, apply to the set of all solutions, and provide much stronger constraints than are needed to preserve the corresponding global invariants. Moreover, when the domain and boundary conditions are suitable, conservation of such invariants is automatically achieved.

A new approach for developing finite difference schemes that preserve conservation laws of (\ref{PDE}) was introduced recently in \cite{FCHydon}. It exploits the fact that discrete conservation laws form the kernel of a discrete version of the Euler operator (\ref{Eul}). Discretizations of the PDE (\ref{PDE}) having discrete versions of the desired conservation laws are obtained by requiring that a discrete version of condition (\ref{eulcond}) is satisfied.
This requires the symbolic solution of a large system of nonlinear equations that is impractical in general.
The complexity of the symbolic calculations can be reduced by introducing compactness requirements on the schemes, and this direct approach has been applied to a range of PDEs with different structure in \cite{FCHydon,FCHmkdv,FCHnls}. However, the direct approach is greatly limited by the capacity of symbolic computation; it has been applied only to second-order approximations of PDEs with two independent variables.

In this paper we modify the approach in \cite{FCHydon} by finding semidiscretizations of (\ref{PDE}) that preserve semidiscrete local conservation laws. The reduction to one discrete space dimension significantly reduces the complexity of the computations, to the point that the determining system can be solved easily without introducing any restrictions on the form of the schemes. After this, a suitable integrator in time needs to be chosen to create a fully-discrete scheme; this depends on the form of the conservation laws that one aims to preserve. 

If the PDE is equipped with conservative boundary conditions, it is known that the quadratic invariants of its space discretizations are preserved by symplectic Runge-Kutta methods \cite{Coop,SSC,CIZ}. In this paper we extend this result to prove that if $G$ is quadratic in $[u]$ then any symplectic Runge--Kutta method preserves the conservation law (\ref{CLaw}) locally, regardless of the boundary conditions.

There are various results on local conservation for Hamiltonian PDEs,
\begin{equation}\label{HamPDE}
D_t \bu=\mathcal{D}\left([\bu]_x\right)\mce(H\left([\bu]_x\right)),
\end{equation}
where $[\bu]_x$ denotes $\bu$ and its spatial derivatives only, $\mathcal{D}$ is a skew-adjoint operator that satisfies the Jacobi identity, and $H$ is the Hamiltonian function. 

Multisymplectic schemes \cite{BR} and their generalizations \cite{Sun} can preserve local conservation laws with quadratic flux and density. Requiring the flux to be quadratic is however a strong constraint that is not satisfied by local momentum conservation laws of many important equations in physics such as the nonlinear Schr\"odinger (NLS) equation, the Korteweg-de Vries (KdV) equation, the Benjamin-Bona-Mahony (BBM) equation, the modified Korteweg-de Vries (mKdV) equation, and the Boussinesq equation. The strategy introduced in this paper does not suffer from this restriction, as no assumption is needed about the flux, so it can be applied to the preservation of these conservation laws as well.

Another popular approach is to use a discrete gradient method for the time integration of (\ref{HamPDE}).
These are obtained from a semidiscretization of $H$ and a skew-adjoint discretization of $\mathcal{D}$, and preserve a discrete conservation law of the energy \cite{QMcL}. One widely-used discrete gradient method is the Average Vector Field (AVF) method \cite{Cell,Cell2,McLaren}. We show that the AVF method yields the local conservation law of the Hamiltonian under constraints on the discretization of $\mathcal{D}$ that are milder than skew-adjointness. Consequently, conservation of the local Hamiltonian can be achieved for a larger class of discretizations.

Although the discussion so far has focused on PDEs with two independent variables, the approach of discretizing one dimension at a time works equally well for PDEs on higher-dimensional spaces. We discuss this and give an illustration.

The paper is organised as follows. Section~\ref{spacedis} introduces a method for obtaining conservative spatial semidiscretizations. Section~\ref{timeint} focuses on time integration. In particular, we show the following.
\begin{itemize}
\item[$\bullet$] Conservation laws with quadratic density (without any assumption on the flux) are preserved by any symplectic method in time locally and independently of the boundary conditions. Conservation laws for mass, charge and momentum are typically in this class.
\item[$\bullet$] For Hamiltonian PDEs, the AVF method preserves the local semidiscrete conservation law of the energy for a wide class of semidiscretizations that generalizes the result in \cite{QMcL}.
\item[$\bullet$] For other types of conservation law, fully-discrete methods can be found by introducing relatively few parameters and fixing them by requiring that the conservation law is in the kernel of a fully-discrete Euler operator. This approach can be iterated for dimensions, by using a sequence of semidiscrete and discrete Euler operators.
\end{itemize} 

In Section~\ref{GBsec} we apply this new approach to the Boussinesq equation and introduce methods of order two and four that preserve three conservation laws. Section~\ref{Numsec} describes numerical benchmark tests, including evidence of stability and comparison with other methods from the literature. In Section~\ref{pKPsec}, we apply the new technique to a two-dimensional PDE, the potential Kadomtsev-Petviashvili (pKP) equation and introduce two families of schemes that preserve two conservation laws. Finally, we draw some conclusions in Section~\ref{concl}.

\section{Conservative space discretizations}\label{spacedis}
We begin with a regular spatial grid. The stencil consists of $M=B-A+1$ nodes, \begin{equation}\label{nodesx}
x_{m}=x_0+m\Delta x, \qquad m=A,\ldots,B,
\end{equation}
where $x_0$ is a generic grid point; let $\bx$ denote the vector of the nodes. 
The forward shift operator $S_m$ acts as follows on any semidiscrete function $f$:
\begin{equation*}
S_m:f(x_m,t)\mapsto f(x_{m+1},t);
\end{equation*}
the forward difference, forward average, and centered difference operators are
\begin{equation}\label{DmMm}
D_m=\tfrac{1}{\Delta x}(S_m-I),\qquad \mu_m=\tfrac{1}2(S_m+I),\qquad \Dc_m=\tfrac{1}{2\Delta x}(S_m-S_m^{-1})
\end{equation}
respectively, where $I$ is the identity operator.
The semidiscretizations of $u^\alpha\!(x,\!t)$ are given by the column vector $\bU\in\mathbb{R}^{Mq}$ with the {$(m+\alpha M-B)$-th} entry 
$$U_{m}^\alpha(t)\approx u^\alpha(x_m,t),\qquad m=A,\ldots,B,\quad \alpha=1,\ldots,q.$$
The semidiscrete problem is
\begin{equation}\label{SDPDE}
\widetilde{\mca}(\bx,t,[\bU])=\mathbf{0},
\end{equation}
where here and henceforth tildes represent approximations to the corresponding continuous quantities, and square brackets around a semidiscrete expression denote the expression and a finite number of its  time derivatives.

A semidiscrete conservation law of (\ref{SDPDE}) is a semidiscrete divergence,
\begin{equation}\label{SDCL}
\text{Div}\,\widetilde{\mathbf{F}}=D_m\{\widetilde{F}(\bx,t,[\bU])\}+D_t\{\widetilde{G}(\bx,t,[\bU])\},
\end{equation}
such that $$\text{Div}\,\widetilde{\mathbf{F}}=0,\,\,\,\text{when}\,\,\,[\widetilde{\mca}=\mathbf{0}].$$
The functions $\widetilde{F}$ and $\widetilde{G}$ are the semidiscrete flux and density of the conservation law (\ref{SDCL}), respectively. Similarly, as in the continuous case, we say that (\ref{SDCL}) is in characteristic form if there exists $\widetilde{\mcq}=\widetilde{\mcq}(\bx,t,[\bU])$, called the characteristic, such that $$\text{Div}\,\widetilde{\mathbf{F}}=\widetilde{\mca}\widetilde{\mcq}.$$
The following result is crucial for obtaining semidiscretizations that preserve conservation laws (see \cite{olverbook} and \cite{kuper} for analogous results in the continuous and totally discrete setting, respectively).
\begin{theo}\label{th1}
The kernel of the semidiscrete Euler operator $\mathsf{E}_\bU$, whose $\alpha$-th entry is 
\begin{equation*}
(\mathsf{E}_\bU)_\alpha=\sum_{i,j}S_m^{-i}(-D_t)^j\frac{\partial}{\partial (D_t^jU_{i}^\alpha)},
\end{equation*}
is the space of semidiscrete divergences (\ref{SDCL}).
\end{theo}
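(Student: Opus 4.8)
The plan is to mimic the classical proof that total divergences form the kernel of the Euler operator (Olver) and its fully-discrete analogue (Kupershmidt), adapting it to the \emph{mixed} setting in which the $x$-direction is discrete (so $D_x$ is replaced by the shift $S_m$ and there are only finitely many nodes $m=A,\dots,B$) while the $t$-direction remains continuous (so $D_t$ survives, together with all its powers). One inclusion is routine: given a semidiscrete divergence $\widetilde{F}$ as in (\ref{SDCL}), apply $(\mathsf{E}_\bU)_\alpha$ term by term. For the $D_t\{\widetilde G\}$ piece one uses the fact that $(-D_t)^j\partial/\partial(D_t^jU^\alpha_i)$ applied to $D_t\{\widetilde G\}$ telescopes under the alternating sign, exactly as in the continuous Euler-operator calculation; for the $D_m\{\widetilde F\}=\tfrac1{\Dx}(S_m-I)\{\widetilde F\}$ piece one uses that $\sum_i S_m^{-i}\partial/\partial U^\alpha_i$ is insensitive to a shift of the summand, so the $S_m$ and $I$ contributions cancel. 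Hence every semidiscrete divergence lies in $\ker\mathsf{E}_\bU$.

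The substantive direction is the converse: if $\mathsf{E}_\bU(f)=\mathbf 0$ then $f=\mathrm{Div}\,\widetilde{\mathbf F}$ for some $\widetilde{\mathbf F}$. The standard device is the homotopy (null) operator. Introduce the scaling $f_\lambda:=f(\bx,t,[\lambda\bU])$ (one may first reduce to the case $f$ vanishing when $\bU\equiv0$ by subtracting $f(\bx,t,[\mathbf 0])$, which is itself a divergence in $t$), and write
\begin{equation*}
f = f_1 - f_0 = \int_0^1 \frac{\dd}{\dd\lambda} f_\lambda\,\dd\lambda
 = \int_0^1 \sum_{\alpha}\sum_{i,j} (D_t^jU_i^\alpha)\,\frac{\partial f_\lambda}{\partial(D_t^jU_i^\alpha)}\,\frac{\dd\lambda}{\lambda}.
\end{equation*}
On each summand one integrates by parts, moving the powers of $D_t$ off $(D_t^jU_i^\alpha)$ and the shifts $S_m^{-i}$ off $U_i^\alpha$; every such integration by parts (in $t$) produces an exact $D_t$-term, and every re-indexing of the $x$-shift produces an exact $D_m$-term, because $(S_m^{-i}-I)\{g\}=(S_m-I)\{\,\cdots\}$ is a forward difference of something. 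Collecting the boundary terms assembles a candidate flux $\widetilde F$ and density $\widetilde G$, while the leftover non-exact part is precisely $\big(\mathsf{E}_\bU f_\lambda\big)$ contracted with $\bU$, integrated in $\lambda$; this vanishes by hypothesis. The finiteness of the stencil ($m$ ranging over $A,\dots,B$) is what makes the $x$-summations finite and the ``integration by parts in $S_m$'' legitimate without decay assumptions, so no subtlety arises there.

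The main obstacle, and the place to be careful, is the bookkeeping of the two independent homotopy/integration-by-parts mechanisms acting simultaneously: one must verify that the shift-resummation in the discrete variable and the $D_t$-integration-by-parts in the continuous variable commute appropriately, so that the ``boundary'' contributions genuinely organize into a single $D_m\{\widetilde F\}+D_t\{\widetilde G\}$ rather than into mixed terms that cannot be so written. Concretely, one should record the identity, for a general expression $g$ depending on $D_t^jU^\alpha_i$,
\begin{equation*}
g\,\frac{\partial f}{\partial(D_t^jU^\alpha_i)}
 = S_m^{-i}(-D_t)^j\!\left(g\,\frac{\partial f}{\partial(D_t^jU^\alpha_i)}\right)
 + D_m\{\cdots\} + D_t\{\cdots\},
\end{equation*}
with explicit (if unenlightening) remainder terms, and then sum over $i,j,\alpha$ and integrate in $\lambda$; the sum of the $S_m^{-i}(-D_t)^j(\cdots)$ terms over $i$ is exactly $\mathsf{E}_\bU$ applied to the relevant quantity. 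I would state this as a lemma (the mixed homotopy formula) and then deduce the theorem in one line. The only real ``proof'' content is checking that lemma; everything else is formal. One may alternatively cite \cite{olverbook,kuper} and simply indicate that their arguments go through verbatim with $D_x\rightsquigarrow S_m$ in the spatial slot, since the excerpt already flags those references as the continuous and fully-discrete templates.
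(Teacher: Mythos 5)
Your proposal is correct and follows essentially the same route as the paper's proof: the forward inclusion via the two standard annihilation identities for $D_m\{\widetilde F\}$ and $D_t\{\widetilde G\}$, and the converse via the scaling homotopy $\varepsilon\mapsto L(\bx,t,\varepsilon[\bU])$ with integration by parts in $t$ and re-summation of the shifts $S_m^{-i}$, leaving $U_0^\alpha$ contracted with $\mathsf{E}_\bU$ evaluated at the scaled argument, which vanishes by hypothesis. Your extra remarks (subtracting $f(\bx,t,[\mathbf 0])$, which is a $D_t$-divergence, and checking that the shift re-indexing and the $D_t$-integration by parts combine into a single $D_m\{\widetilde F\}+D_t\{\widetilde G\}$) only make explicit points the paper's proof passes over silently.
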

\begin{proof}
Let $L=L(\bx,t,[\bU])$ such that $\mathsf{E}_\bU(L)=\mathbf{0}$, and consider the derivative
\begin{equation}\label{dLdeps}
\frac{\dd}{\dd \varepsilon}L(\bx,t,\varepsilon [\bU])=\sum_{\alpha,i,j}(D_t^jU_i^\alpha)\frac{\partial L(\bx,t,\varepsilon[\bU])}{\partial(D_t^jU_i^\alpha)}.
\end{equation}
Integrating by parts yields
\begin{align*}
(D_t^jU_i^\alpha)\frac{\partial L}{\partial(D_t^jU_i^\alpha)}&\,=U_i^\alpha(-D_t)^j\frac{\partial L}{\partial(D_t^jU_i^\alpha)}+D_t \hat G\\
&\,=U_0^\alpha{S_m^{-i}(-D_t)^j\frac{\partial L}{\partial(D_t^jU_i^\alpha)}}+D_m \hat F+D_t \hat G=D_m \hat F+D_t \hat G,
\end{align*}
for some functions $\hat F=\hat F(\bx,t,\varepsilon,[\bU])$ and $\hat G=\hat G(\bx,t,\varepsilon,[\bU])$ whose precise expression is not of importance. Substituting this into (\ref{dLdeps}) and integrating over $\varepsilon\in [0,1]$ shows that $L$ is a semidiscrete divergence.

If $L$ is of the form (\ref{SDCL}), $\mathsf{E}_\bU(L)=\mathbf{0}$ follows from the linearity of the Euler operator and from the fact that for any $k$ (see, e.g., \cite{hydonbook}),
\begin{align*}
\left(\sum_{i}S_m^{-i}\frac{\partial}{\partial (D_t^k\bU_{i}^\alpha)}\right)(D_m\widetilde{F})=0,\,\,\, \left(\sum_{j}(-D_t)^j\frac{\partial}{\partial (D_t^j\bU_{k}^\alpha)}\right)(D_t\widetilde{G})=0. \qed
\end{align*}
\end{proof}
Based on the result in Theorem~\ref{th1}, the approach used in \cite{FCHydon, FCHmkdv,FCHnls} to preserve fully-discrete conservation laws, is adapted here to the preservation of semidiscrete conservation laws of (\ref{PDE}) with characteristics $\mcq_\ell$, as follows:
\begin{enumerate}
\item Select a stencil that is large enough to support generic semidiscretizations for $\mca$ and every $\mcq_\ell$, having the desired order of accuracy. These approximations depend on a number of free parameters to be determined.
\item Find some of the parameters by imposing consistency, up to the desired order of accuracy, $p$.
\item Use symbolic algebra to determine the values of the free parameters that satisfy
\begin{equation}\label{sdeulcond}
\mathsf{E}_\bU(\widetilde{\mca}\widetilde{\mcq}_\ell)=\mathbf{0},
\end{equation}
for $\ell=1$. This guarantees that the first conservation law is locally preserved. As both $\widetilde{\mca}$ and $\widetilde{\mcq}_\ell$ are accurate to order $p$, the discrete conservation law has the same order of accuracy.
\item Iterate the previous step, replacing $\widetilde{\mcq}_1$ with $\widetilde{\mcq}_\ell$, to obtain further constraints on the parameters. If (\ref{sdeulcond}) has no solution for some $\ell$, the corresponding conservation law cannot be preserved without violating one of the previous conservation laws. Typically, the more complicated a conservation law is, the more parameters need to be fixed to preserve it.
\end{enumerate}
\begin{rem}
It might seem appealing to identify a set of conservation laws that one wishes to preserve and use brute force symbolic computation to solve all constraints simultaneously. (This was our approach initially.) However, this takes far longer than the sequential approach and commonly comes up with a null result, with no indication as to which subsets of conservation laws can be preserved. The sequential algorithm above enables the user to decide which conservation laws should be prioritized. At each iteration, the computation is simplified by the fact that some parameters have already been fixed.
\end{rem}
\begin{rem}
If the algorithm above does not produce any schemes for a given stencil, one could try preserving the same conservation laws using a wider stencil. However, the wider the stencil is, the more the computational cost increases. Moreover, if one finds a conservative semidiscretization, a time integrator that preserves all the conservation laws is also needed. For example, in the next section we prove that some known time integrators preserve conservation laws whose density is either quadratic, or is a Hamiltonian, but to the best of our knowledge there are no methods that preserve both of these types.
\end{rem}

\section{Time integration}\label{timeint} We begin by considering one-step time integrators. For fully-discrete schemes the stencil is $$(x_{m},t_n),\quad m=A,\ldots,B,\quad n=0,1, \quad t_1=t_0+\Delta t,$$ 
and the forward shift operators in space and time are
$$S_m:f(x_m,t_n)\mapsto f(x_{m+1},t_n),\qquad S_n:f(x_m,t_0)\mapsto f(x_m,t_{1}),$$
respectively. The forward difference and forward average operators in space are defined by (\ref{DmMm}) and the corresponding operators in time are
$$D_n=\tfrac{1}{\Delta t}(S_n-I),\qquad \mu_n=\tfrac{1}2(S_n+I).$$
Let $\bu_{n}\in\mathbb{R}^{Mq}$ be the column vector whose $(m+\alpha M-B)$-th entry is
$$u_{m,n}^\alpha\approx u^\alpha(x_m,t_n), \qquad m=A,\ldots,B,\quad \alpha=1,\ldots,q,$$
and let  $\bu_{m,n}\in\mathbb{R}^{q}$ be the column vector with entries
$$u_{m,n}^\alpha\approx u^\alpha(x_m,t_n), \qquad \alpha=1,\ldots,q.$$
\subsection{Conservation laws with quadratic density}\label{Clquadden}
Here attention is restricted to PDEs of the form
\begin{equation}\label{PDEtimesec}
D_t\left\{\bg(x,[\bu]_x)\right\}=\bh(x,t,[\bu]_x),
\end{equation}
where $\bg$ is linear homogeneous in $[\bu]_x$; these include Hamiltonian PDEs.
Consider a conservation law of (\ref{PDEtimesec}) of the form
\begin{equation}\label{momentum}
D_x\{F_2(x,t,[u]_x,[u_t]_x)\}+D_t\{G_2(x,[u]_x)\}=0,
\end{equation}
where the density, $G_2$, is a polynomial of degree two in $[u]_x$. (Without loss of generality, assume that no terms in $G_2$ depend on $x$ only.)
For many differential problems of importance in physics (such as KdV, NLS and BBM equations), the conservation laws of mass (or charge) and momentum are of the form (\ref{momentum}) with linear and quadratic density, respectively.  

Let $P(\bx)$ be an invertible operator such that
\begin{equation*}
\widetilde{\bg}(\bx,\bU)=P^{-1}(\bx)\bU
\end{equation*} 
and $\widetilde{\bh}(\bx,t,\bU)$ are two column vectors whose $(m+\alpha M-B)$-th entry is a spatial discretization of the $\alpha$-th component of $\bg(x,[\bu]_x)$ and $\bh(x,t,[\bu]_x)$ at $x_m$, respectively. Let
\begin{equation}\label{SDcon}
D_t\{\widetilde{\bg}(\bx,\bU)\}=\widetilde{\bh}(\bx,t,P(\bx)\widetilde{\bg}(\bx,\bU))
\end{equation}
be a semidiscretization of the PDE (\ref{PDEtimesec}) with the following approximation to the conservation law (\ref{momentum}): 
\begin{equation}\label{SDmom}
D_m\{\widetilde{F}_2(\bx,t,\bU,\bU_t)\}+D_t\{\widetilde{G}_2(\bx,\bU)\}=0.
\end{equation}
Such a semidiscretization can be obtained using the technique in Section~\ref{spacedis}. 
The flux and density of (\ref{SDmom}) have the general form
\begin{align*}
\widetilde{F}_2(\bx,t,\bU,\bU_t)&=\widetilde{F}_2\left(\bx,t,\bU,P(\bx)\widetilde{\bh}(\bx,t,\bU)\right),\\
\widetilde{G}_2(\bx,\bU)&=\tfrac{1}{2}\bU^T S(\bx)\bU+\bw(\bx)^T\bU,
\end{align*}
where $S(\bx)=S(\bx)^T\in\mathbb{R}^{Mq\times Mq}$ and $\bw(\bx)\in\mathbb{R}^{Mq}$ is a column vector. 

The following theorem shows that symplectic Runge--Kutta methods preserve local conservation laws with quadratic density. The proof adapts Calvo, Iserles and Zanna's proof that such methods preserve quadratic invariants of systems of ODEs \cite{CIZ}, to take contributions from the flux into account.
\begin{theo}\label{th2}
The solution of any symplectic Runge--Kutta method applied to (\ref{SDcon}) satisfies a discrete version of (\ref{SDmom}).
\end{theo}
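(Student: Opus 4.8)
The plan is to follow Calvo, Iserles and Zanna's proof that symplectic Runge--Kutta methods preserve quadratic invariants of ODEs, carrying the spatial flux along through every line. First I would turn (\ref{SDcon}) into an explicit ODE system. Since $\widetilde{\bg}(\bx,\bU)=P^{-1}(\bx)\bU$ is linear and $P(\bx)$ is invertible, the variable advanced in time is $\mathbf{v}:=\widetilde{\bg}(\bx,\bU)=P^{-1}(\bx)\bU$ (equivalently $\bU$ itself, as $P$ is independent of $t$), and it satisfies $\dot{\mathbf{v}}=\widetilde{\bh}(\bx,t,P(\bx)\mathbf{v})=:\bfi(t,\mathbf{v})$. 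Because $\bU=P(\bx)\mathbf{v}$ depends on $\mathbf{v}$ linearly, the density stays quadratic in the advanced variable, $\widetilde{G}_2(\bx,\bU)=\tfrac12\,\mathbf{v}^{T}C(\bx)\mathbf{v}+\bc(\bx)^{T}\mathbf{v}$, with $C:=P^{T}SP=C^{T}$ and $\bc:=P^{T}\bw$. Substituting $\bU_t=P\bfi$ into the flux, the semidiscrete conservation law (\ref{SDmom}) becomes the identity
\[
\big(\mathbf{v}^{T}C(\bx)+\bc(\bx)^{T}\big)\bfi(t,\mathbf{v})=-D_m\big\{\widetilde{F}_2\big(\bx,t,P(\bx)\mathbf{v},P(\bx)\bfi(t,\mathbf{v})\big)\big\},
\]
which holds pointwise in $(\bx,t,\mathbf{v})$, not merely on solutions, since after eliminating $\bU_t$ it no longer refers to the differential equation.

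Next I would apply a symplectic Runge--Kutta method, whose coefficients satisfy $b_i a_{ij}+b_j a_{ji}=b_i b_j$. With stages $\mathbf{V}_i=\mathbf{v}_0+\Delta t\sum_j a_{ij}\bk_j$, slopes $\bk_i=\bfi(t_0+c_i\Delta t,\mathbf{V}_i)$, and update $\mathbf{v}_1=\mathbf{v}_0+\Delta t\sum_i b_i\bk_i$, expand
\[
\widetilde{G}_2(\bx,\bu_1)-\widetilde{G}_2(\bx,\bu_0)=\tfrac12\big(\mathbf{v}_1^{T}C\mathbf{v}_1-\mathbf{v}_0^{T}C\mathbf{v}_0\big)+\bc^{T}(\mathbf{v}_1-\mathbf{v}_0).
\]
The Calvo--Iserles--Zanna manipulation of the quadratic part---insert $\mathbf{v}_0=\mathbf{V}_i-\Delta t\sum_j a_{ij}\bk_j$, use $C=C^{T}$, and replace each $b_ib_j$ by $b_ia_{ij}+b_ja_{ji}$---cancels the two $\Delta t^{2}$ double sums and leaves $\tfrac12(\mathbf{v}_1^{T}C\mathbf{v}_1-\mathbf{v}_0^{T}C\mathbf{v}_0)=\Delta t\sum_i b_i\,\mathbf{V}_i^{T}C\bk_i$, while the linear term is simply $\bc^{T}(\mathbf{v}_1-\mathbf{v}_0)=\Delta t\sum_i b_i\,\bc^{T}\bk_i$. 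Therefore
\[
\widetilde{G}_2(\bx,\bu_1)-\widetilde{G}_2(\bx,\bu_0)=\Delta t\sum_i b_i\big(\mathbf{V}_i^{T}C+\bc^{T}\big)\bk_i.
\]

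At this point the argument leaves CIZ behind: rather than each summand vanishing, the pointwise identity of the first step, evaluated at $\mathbf{v}=\mathbf{V}_i$ and $t=t_0+c_i\Delta t$ (so that $\bfi(t_0+c_i\Delta t,\mathbf{V}_i)=\bk_i$), gives $(\mathbf{V}_i^{T}C+\bc^{T})\bk_i=-D_m\{\widetilde{F}_2(\bx,t_0+c_i\Delta t,\bU_i,\bU_i')\}$, where $\bU_i:=P(\bx)\mathbf{V}_i$ and $\bU_i':=P(\bx)\bk_i$ are the internal-stage approximations to $\bU$ and $\bU_t$. Dividing by $\Delta t$ and using linearity of $D_m$,
\[
D_n\big\{\widetilde{G}_2(\bx,\bu_n)\big\}=-D_m\Big\{\sum_i b_i\,\widetilde{F}_2\big(\bx,t_0+c_i\Delta t,\bU_i,\bU_i'\big)\Big\},
\]
which is a discrete version of (\ref{SDmom}) with discrete flux $\widehat{F}_2=\sum_i b_i\,\widetilde{F}_2(\bx,t_0+c_i\Delta t,\bU_i,\bU_i')$, the $b$-weighted average of the semidiscrete flux over the stages.

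The only step that is not routine is the promotion of the semidiscrete conservation law from a relation valid on solutions to an identity valid for all $(\bx,t,\mathbf{v})$; this is what makes it legitimate to evaluate it at the stage vectors $\mathbf{V}_i$, which are not solution values, and it works precisely because the flux's dependence on $\bU_t$ is resolved through the semidiscretization's right-hand side. The remaining care needed is only in not dropping terms: the linear homogeneity of $\widetilde{\bg}$ is exactly what keeps $\widetilde{G}_2$ quadratic in the advanced variable (so that the CIZ machinery applies) and makes (\ref{SDcon}) a genuine ODE, and the linear part $\bw^{T}\bU$ of the density must be carried along, where it supplies the $\bc^{T}\bk_i$ contributions. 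Everything else reproduces the CIZ computation verbatim.
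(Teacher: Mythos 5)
Your proposal is correct and follows essentially the same route as the paper: both adapt the Calvo--Iserles--Zanna argument by first noting that, after substituting the semidiscrete right-hand side for $\bU_t$, the conservation law \eqref{SDmom} becomes an identity that can be evaluated at the internal stages, then cancelling the $\Delta t^2$ double sums via the symplecticity condition and summing the stagewise identities to produce the discrete flux $\sum_i b_i\widetilde{F}_2$. The only difference is notational (you absorb $P$ into $C=P^TSP$, $\bc=P^T\bw$ rather than carrying $S$, $\bw$ and $P\widetilde\bg$ explicitly), which does not change the computation.
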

\begin{proof} The conservation law (\ref{SDmom}) amounts to
\begin{align}\label{cond1}\nonumber
D_m&\!\left\{\widetilde{F}_2\left(\bx,t,P(\bx)\widetilde \bg,P(\bx)\widetilde \bh\left(\bx,t,P(\bx)\widetilde \bg\right)\right)\right\}=-D_t\{\widetilde{G}_2(\bx,P(\bx)\widetilde \bg)\}\\ 
&=-\left((P(\bx)\widetilde\bg)^TS(\bx)+\bw(\bx)^T\right)P(\bx)\widetilde\bh(\bx,t,P(\bx)\widetilde\bg).
\end{align} 
Solving (\ref{SDcon}) using a $s$-stage symplectic Runge--Kutta method,
\begin{equation}\label{RK}
\widetilde\bg_{n+1}=\widetilde\bg_{n}+\Delta t\sum_{i=1}^sb_i \widetilde\bh(\bx,t_n+c_i\Delta t,P(\bx)\bk_i)\equiv\widetilde\bg_{n}+\Delta t\sum_{i=1}^sb_i \widetilde\bh_i,
\end{equation}
with internal stages
\begin{equation}\label{kstage}
\bk_i=\widetilde\bg_n+\Delta t\sum_{j=1}^sa_{i,j}\widetilde\bh_j,\quad i=1,\ldots,s,
\end{equation}
we obtain $\bu_n=P(x)\widetilde\bg_n$. Moreover,
\begin{align*}
\widetilde{G}_2&(\bx,\bu_{n+1})=\tfrac{1}2\bu^T_{n+1} S(\bx)\bu_{n+1}+\bw(\bx)^T\bu_{n+1}=\left(\tfrac{1}2 (P(\bx)\widetilde\bg_{n+1})^T S(\bx)+\bw(\bx)^T\right)P(\bx)\widetilde\bg_{n+1}\\
\,=&\left(\tfrac{1}2 \bu_n^T S(\bx)+\bw(\bx)^T\right)\bu_n+\Delta t\sum_{i=1}^sb_i\left((P(\bx)\widetilde\bg_n)^T S(\bx)+\bw(\bx)^T\right)P(\bx)\widetilde\bh_i\\
&+\tfrac{\Delta t^2}2\sum_{i,j=1}^sb_ib_j \left(P(\bx)\widetilde\bh_j\right)^TS(\bx)P(\bx)\widetilde\bh_i.
\end{align*}
Using (\ref{kstage}) to eliminate $\widetilde\bg_n$ from the first sum and rearranging, gives
\begin{align*}
\widetilde{G}_2(\bx,\bu_{n+1})=&\,\widetilde{G}_2(\bx,\bu_{n})+\Delta t\sum_{i=1}^sb_i\left( (P(\bx)\bk_i)^T S(\bx)+\bw(\bx)^T\right)P(\bx)\widetilde\bh_i\\
&+\!\tfrac{\Delta t^2}2\sum_{i,j=1}^s\!(b_jb_i\!-\!b_ia_{i,j}\!-\!b_ja_{j,i})\left(P(\bx)\widetilde\bh_j\right)^T\!\!\!S(\bx)P(\bx)\widetilde\bh_i.
\end{align*}
The condition of symplecticity,
\begin{equation*}
b_ia_{i,j}+b_ja_{j,i}-b_ib_j=0,\quad \forall\, i,j=1,2,\ldots,s,
\end{equation*}
and (\ref{cond1}) give
\begin{equation*}
D_m\left\{\sum_{i=1}^s b_i\widetilde{F}_2\left(\bx,t_n+c_i\Delta t,P(\bx)\bk_i,P(\bx)\widetilde\bh_i\right)\right\}+D_n\{\widetilde{G}_2(\bx,\bu_n)\}=0,
\end{equation*}
which is an approximation of (\ref{momentum}). $\QEDB$
\end{proof}
\begin{rem}
Multisymplectic methods preserve conservation laws whose density and flux are both quadratic. By contrast, Theorem~\ref{th2} applies to all conservation laws that have quadratic density. As no assumption is needed on the flux, a larger class of conservation laws can be preserved. 
\end{rem}
The following results follow directly from the proof of Theorem~\ref{th2}.
\begin{cor}\label{cor1}
Any Runge--Kutta method preserves semidiscrete local conservation laws whose density is linear in $[\bu]_x$.
\end{cor}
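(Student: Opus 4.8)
The plan is to specialise the computation in the proof of Theorem~\ref{th2} to the degenerate case $S(\bx)=\mathbf{0}$. If the density $\widetilde{G}_2$ is linear in $[\bU]$ then, in the notation of that proof, $S(\bx)=\mathbf{0}$ and $\widetilde{G}_2(\bx,\bU)=\bw(\bx)^T\bU$, so that condition~(\ref{cond1}) collapses to
\begin{equation*}
D_m\!\left\{\widetilde{F}_2\!\left(\bx,t,P(\bx)\widetilde\bg,P(\bx)\widetilde\bh\!\left(\bx,t,P(\bx)\widetilde\bg\right)\right)\right\}=-\,\bw(\bx)^TP(\bx)\widetilde\bh\!\left(\bx,t,P(\bx)\widetilde\bg\right).
\end{equation*}
First I would apply a generic (not necessarily symplectic) $s$-stage Runge--Kutta method~(\ref{RK})--(\ref{kstage}) to the semidiscretisation~(\ref{SDcon}), exactly as before, so that $\bu_n=P(\bx)\widetilde\bg_n$. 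Evaluating the linear density at $\bu_{n+1}=P(\bx)\widetilde\bg_{n+1}$ and using~(\ref{RK}) yields at once
\begin{equation*}
\widetilde{G}_2(\bx,\bu_{n+1})=\widetilde{G}_2(\bx,\bu_n)+\Delta t\sum_{i=1}^s b_i\,\bw(\bx)^TP(\bx)\widetilde\bh_i .
\end{equation*}

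The point is that the double sum of order $\Delta t^2$ that appears in the proof of Theorem~\ref{th2} carries an overall factor $S(\bx)$ and therefore vanishes identically; likewise the term involving $(P(\bx)\bk_i)^TS(\bx)$ disappears, so the substitution of~(\ref{kstage}) used there to turn $\widetilde\bg_n$ into $\bk_i$ is simply not needed here. Consequently the symplecticity relations $b_ia_{i,j}+b_ja_{j,i}-b_ib_j=0$ are never invoked, and this is exactly why \emph{every} Runge--Kutta method, explicit or implicit, works. To finish, I would apply the reduced form of~(\ref{cond1}) at time $t_n+c_i\Delta t$ with $P(\bx)\bk_i$ in the role of $P(\bx)\widetilde\bg$, rewriting $\bw(\bx)^TP(\bx)\widetilde\bh_i=-D_m\{\widetilde{F}_2(\bx,t_n+c_i\Delta t,P(\bx)\bk_i,P(\bx)\widetilde\bh_i)\}$, so that
\begin{equation*}
D_m\!\left\{\sum_{i=1}^s b_i\,\widetilde{F}_2\!\left(\bx,t_n+c_i\Delta t,P(\bx)\bk_i,P(\bx)\widetilde\bh_i\right)\right\}+D_n\{\widetilde{G}_2(\bx,\bu_n)\}=\mathbf{0},
\end{equation*}
which is the sought fully-discrete conservation law and has the same order of accuracy as~(\ref{SDmom}).

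I do not expect any real obstacle: the statement is a corollary in the literal sense, obtained by setting $S(\bx)=\mathbf{0}$ in a calculation that has already been carried out. The only things worth spelling out in the write-up are that no condition on the Butcher tableau beyond consistency is used, and that, should one prefer to retain the $\widetilde\bg_n$-elimination step for uniformity with Theorem~\ref{th2}, it produces only terms proportional to $S(\bx)$ and hence changes nothing.
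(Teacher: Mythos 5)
Your proposal is correct and is exactly the argument the paper intends: the corollary is stated as following directly from the proof of Theorem~\ref{th2}, namely by observing that a linear density corresponds to $S(\bx)=\mathbf{0}$, so the $O(\Delta t^2)$ double sum vanishes and the symplecticity condition on the Butcher coefficients is never needed. Your write-up simply makes that specialization explicit, which is fine.
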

\begin{cor}
The symplectic implicit midpoint method (defined by (\ref{RK})-(\ref{kstage}) with $s=1, b_1=1,$ and $a_{1,1}=c_1=1/2$) applied to (\ref{SDcon}) preserves the conservation law 
\[D_m\left\{\widetilde{F}_2\left(\bx,t_n+\tfrac{1}2{\Delta t},\mu_n\bu_n,D_n\bu_n\right)\right\}+D_n\{\widetilde{G}_2(\bx,\bu_n)\}=0.\]
\end{cor}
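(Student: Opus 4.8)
The plan is to obtain this corollary by specializing the proof of Theorem~\ref{th2} to the one-stage case. First I would check that the implicit midpoint method is a genuinely symplectic Runge--Kutta method: with $s=1$, $b_1=1$, $a_{1,1}=1/2$ the symplecticity condition reads $b_1a_{1,1}+b_1a_{1,1}-b_1b_1=\tfrac12+\tfrac12-1=0$, so it is satisfied. Hence Theorem~\ref{th2} applies verbatim, and the fully-discrete scheme it produces satisfies
\[
D_m\Big\{\sum_{i=1}^{1}b_i\,\widetilde{F}_2\big(\bx,t_n+c_i\Delta t,P(\bx)\bk_i,P(\bx)\widetilde\bh_i\big)\Big\}+D_n\{\widetilde{G}_2(\bx,\bu_n)\}=0,
\]
which, using $b_1=1$ and $c_1=1/2$, is simply $D_m\{\widetilde{F}_2(\bx,t_n+\tfrac12\Delta t,P(\bx)\bk_1,P(\bx)\widetilde\bh_1)\}+D_n\{\widetilde{G}_2(\bx,\bu_n)\}=0$.

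It then remains to identify $P(\bx)\bk_1$ with $\mu_n\bu_n$ and $P(\bx)\widetilde\bh_1$ with $D_n\bu_n$. From the update formula (\ref{RK}) with $b_1=1$ one has $\widetilde\bg_{n+1}=\widetilde\bg_n+\Delta t\,\widetilde\bh_1$, so $\widetilde\bh_1=\tfrac{1}{\Delta t}(\widetilde\bg_{n+1}-\widetilde\bg_n)=D_n\widetilde\bg_n$; substituting this into the stage equation (\ref{kstage}) with $a_{1,1}=1/2$ gives $\bk_1=\widetilde\bg_n+\tfrac12\Delta t\,D_n\widetilde\bg_n=\tfrac12(\widetilde\bg_n+\widetilde\bg_{n+1})=\mu_n\widetilde\bg_n$. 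Because $P(\bx)$ depends on $\bx$ alone, it commutes with the temporal shift $S_n$, hence with $\mu_n$ and $D_n$; together with $\bu_n=P(\bx)\widetilde\bg_n$ (recovered in the proof of Theorem~\ref{th2}) this yields $P(\bx)\bk_1=\mu_n\bu_n$ and $P(\bx)\widetilde\bh_1=D_n\bu_n$. The flux argument therefore becomes $\widetilde\bh_1=\widetilde\bh(\bx,t_n+\tfrac12\Delta t,\mu_n\bu_n)$, exactly the abbreviation used in (\ref{RK}), and substituting everything into the displayed conservation law gives the stated identity.

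Since Theorem~\ref{th2} already does the heavy lifting, there is no real obstacle here: the remaining work is purely a change of notation. The one step that needs a little care is the second paragraph --- showing that for the midpoint rule the internal stage value is precisely the time average $\mu_n\bu_n$ and its slope is precisely the forward difference $D_n\bu_n$ --- which uses (\ref{RK}) and (\ref{kstage}) jointly, together with the fact that $P(\bx)$ passes through the temporal operators.
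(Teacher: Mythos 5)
Your proposal is correct and follows exactly the route the paper intends: the paper states that this corollary follows directly from the proof of Theorem~\ref{th2}, and your specialization (verifying the symplecticity condition for $s=1$, $b_1=1$, $a_{1,1}=1/2$, and then identifying $P(\bx)\bk_1=\mu_n\bu_n$ and $P(\bx)\widetilde\bh_1=D_n\bu_n$ from (\ref{RK})--(\ref{kstage}) using the commutation of $P(\bx)$ with the temporal operators) is precisely the computation that justifies that claim.
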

\subsection{Conservation law for the Hamiltonian}\label{CLHamsect}
We consider here the system of Hamiltonian PDEs (\ref{HamPDE}) defined by a Hamiltonian function $H$ on a domain with periodic boundary conditions. This assumption is introduced only for simplicity: the preservation of conservation laws is local and therefore independent of the specific boundary conditions assigned to the differential problem. The following local conservation law for the energy is satisfied by all solutions of (\ref{HamPDE}):
\begin{align}\nonumber
D_t(H)=&\,\sum_{\alpha,j}\frac{\partial H}{\partial(D_x^ju^\alpha)}(D_tD_x^ju^\alpha)=\sum_{\alpha,j}\frac{\partial H}{\partial(D_x^ju^\alpha)}D_x^j(D_tu^\alpha)\\\label{Hclaw}
=&\,D_x(\psi)+\mathcal{E}(H)^T\mathcal{DE}(H)\equiv D_x(F)
\end{align}
with $$\psi=\sum_{\alpha,i,j} (D_x^iD_tu^\alpha)(-D_x)^j\frac{\partial H}{\partial D_x^{i+j+1}u^\alpha}.$$
Among the best-known energy-conserving discrete gradient methods is the Average Vector Field (AVF) method \cite{McLaren}. We can use this in two different ways, depending on the number of points, $M$, in the stencil in (\ref{nodesx}). If $M$ is odd, let $A=-B$ so that the stencil is centred on $x_0$; we denote a semidiscretization of $H([\bu]_x)$ on such a stencil by $\widetilde{H}(\bU)$. 
The AVF method approximates (\ref{HamPDE}) by
\begin{equation}\label{AVF}
D_n\bu_{0,0}=\widetilde{\mathcal{D}}(\bu_0,\bu_1)\widetilde{\delta}(\mathbf{u}_0,\mathbf{u}_1)\equiv\widetilde{\mathcal{D}}(\bu_0,\bu_1)\int_0^1\mathsf{E}_\bU \left(\widetilde H(\mathbf U)\right)\Big\vert_{\mathbf{U}=(1-\xi)\mathbf{u}_0+\xi\mathbf{u}_1}\mathrm{d}\xi.
\end{equation}
If $M$ is even, let $A=1-B$ so that the stencil is centred at the midpoint of $x_0$ and $x_1$. Denote a semidiscretization of $H([\bu]_x)$ as $\widetilde{H}(\mu_m\bU)$. We define the AVF method on such a stencil to be 
\begin{align}\label{AVF2}
\!\!D_n\mu_m\bu_{0,0}=\widetilde{\mathcal{D}}(\bu_0,\bu_1)\widetilde{\delta}(\mu_m\mathbf{u}_0,\mu_m\mathbf{u}_1):=\widetilde{\mathcal{D}}(\bu_0,\bu_1)\!\!\int_0^1\!\!\mathsf{E}_{\mu_m \bU} \left(\widetilde H(\mu_m\mathbf U)\right)\Big\vert_{\mathbf{U}=(1-\xi)\mathbf{u}_0+\xi\mathbf{u}_1}\!\!\mathrm{d}\xi.
\end{align}
McLachlan and Quispel proved in \cite{QMcL} that discrete gradient methods preserve a discrete version of (\ref{Hclaw}) provided that $\widetilde{\mathcal{D}}$ is a skew-adjoint approximation of $\mathcal{D}$. The following theorem proves that the AVF method (\ref{AVF}) preserves the local conservation law for the energy (\ref{Hclaw}) under a milder assumption.
\begin{theo}\label{theoavf}
The AVF methods (\ref{AVF}) and (\ref{AVF2}) preserve a discrete energy conservation law if there exists a function $f$ defined on the stencil such that
\begin{equation}\label{oddmild}
\widetilde{\delta}(\mathbf{u}_0,\mathbf{u}_1)^T\widetilde{\mathcal{D}}(\mathbf{u}_0,\mathbf{u}_1)\widetilde{\delta}(\mathbf{u}_0,\mathbf{u}_1)=D_m(f),
\end{equation}
or
\begin{equation}\label{evenmild}
\widetilde{\delta}(\mu_m\mathbf{u}_0,\mu_m\mathbf{u}_1)^T\widetilde{\mathcal{D}}(\mathbf{u}_0,\mathbf{u}_1)\widetilde{\delta}(\mu_m\mathbf{u}_0,\mu_m\mathbf{u}_1)=D_m(f),
\end{equation}
respectively.
\end{theo}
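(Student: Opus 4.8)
The plan is to reproduce, at the fully-discrete level, the chain-rule-and-integration-by-parts computation (\ref{Hclaw}) that yields the continuous energy conservation law, with the AVF discrete gradient $\widetilde{\delta}$ playing the role of $\mce(H)$ and $\widetilde{\mathcal{D}}$ playing the role of $\mathcal{D}$. For the odd-$M$ case I would first establish the identity
\[
D_n\!\left\{\widetilde H(\bu_{0,0})\right\}=\widetilde{\delta}(\bu_0,\bu_1)^{T}\,D_n\bu_{0,0}+D_m\!\left\{\widetilde\psi\right\},
\]
for some semidiscrete flux $\widetilde\psi=\widetilde\psi(\bx,t_0,[\bu_0],[\bu_1])$ that is a consistent approximation of $\psi$. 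To get this, write $\widetilde H(\bu_1)-\widetilde H(\bu_0)=\int_0^1\frac{\dd}{\dd\xi}\widetilde H\big((1-\xi)\bu_0+\xi\bu_1\big)\,\dd\xi$, expand the $\xi$-derivative by the chain rule into $\sum_{i,\alpha}\frac{\partial\widetilde H}{\partial U_i^\alpha}\,(u^\alpha_{i,1}-u^\alpha_{i,0})$, and then apply exactly the discrete integration by parts in $m$ used in the proof of Theorem~\ref{th1}, this time keeping the total-difference remainder: since $\widetilde H$ carries no time derivatives, the identity $a\,(S_m^i b)=S_m^i\big((S_m^{-i}a)\,b\big)=(S_m^{-i}a)\,b+D_m(\,\cdot\,)$ turns $\frac{\partial\widetilde H}{\partial U_i^\alpha}\,(u^\alpha_{i,1}-u^\alpha_{i,0})$ into $\big(S_m^{-i}\frac{\partial\widetilde H}{\partial U_i^\alpha}\big)(u^\alpha_{0,1}-u^\alpha_{0,0})+D_m(\,\cdot\,)$. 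Summing over $i$ gives the $\alpha$-th component of $\mathsf{E}_\bU(\widetilde H)$ multiplied by $(u^\alpha_{0,1}-u^\alpha_{0,0})=\Delta t\,D_n u^\alpha_{0,0}$, plus a total $m$-difference; integrating over $\xi\in[0,1]$ (which commutes with $\sum_i$, $S_m$ and $D_m$) converts $\int_0^1\mathsf{E}_\bU(\widetilde H)|_{\bU=(1-\xi)\bu_0+\xi\bu_1}\dd\xi$ into $\widetilde{\delta}(\bu_0,\bu_1)$ by (\ref{AVF}) and collects the remainders into $\Delta t\,D_m(\widetilde\psi)$.

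With this identity in hand, substitute the AVF scheme (\ref{AVF}), $D_n\bu_{0,0}=\widetilde{\mathcal{D}}(\bu_0,\bu_1)\widetilde{\delta}(\bu_0,\bu_1)$, to obtain
\[
D_n\!\left\{\widetilde H(\bu_{0,0})\right\}=\widetilde{\delta}(\bu_0,\bu_1)^{T}\widetilde{\mathcal{D}}(\bu_0,\bu_1)\,\widetilde{\delta}(\bu_0,\bu_1)+D_m\!\left\{\widetilde\psi\right\},
\]
which is the discrete counterpart of the last line of (\ref{Hclaw}). Invoking hypothesis (\ref{oddmild}), the quadratic form on the right equals $D_m(f)$, so
\[
D_m\!\left\{-\widetilde\psi-f\right\}+D_n\!\left\{\widetilde H(\bu_{0,0})\right\}=0,
\]
a discrete conservation law with density $\widetilde H$ and flux $-(\widetilde\psi+f)$, consistent with (\ref{Hclaw}) since $\widetilde\psi$ approximates $\psi$ and, in the continuous limit, $\widetilde{\delta}^{T}\widetilde{\mathcal{D}}\widetilde{\delta}\to\mce(H)^{T}\mathcal{D}\,\mce(H)$. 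This recovers McLachlan and Quispel's result \cite{QMcL}: if $\widetilde{\mathcal{D}}$ is skew-adjoint the quadratic form vanishes, i.e. $f=0$ works, whereas (\ref{oddmild}) only requires it to be a spatial total difference.

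For the even-$M$ case the argument is identical after replacing $\bU$ by $\mu_m\bU$ and $\mathsf{E}_\bU$ by $\mathsf{E}_{\mu_m\bU}$ throughout: because $\mu_m$ commutes with $S_m$, $D_m$, $D_n$ and with the $\xi$-integration, the same chain-rule/summation-by-parts step gives $D_n\{\widetilde H(\mu_m\bu_{0,0})\}=\widetilde{\delta}(\mu_m\bu_0,\mu_m\bu_1)^{T}D_n\mu_m\bu_{0,0}+D_m\{\widetilde\psi\}$; substituting (\ref{AVF2}) and using (\ref{evenmild}) then closes the proof exactly as before. I expect the only delicate step to be the first one: carrying out the discrete integration by parts in $m$ while correctly tracking the telescoping remainder terms under the $\xi$-integral, so that the flux $\widetilde\psi$ is produced explicitly and seen to be a genuine semidiscrete flux depending only on the stencil data. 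Once that bookkeeping is done, substituting the scheme and applying (\ref{oddmild})--(\ref{evenmild}) is immediate.
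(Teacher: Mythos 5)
Your argument is correct and is essentially the paper's own proof: both express $D_n\widetilde H$ via the fundamental theorem of calculus along the segment from $\mathbf{u}_0$ to $\mathbf{u}_1$, use discrete summation by parts to produce $\widetilde{\delta}^T(D_n\bu_{0,0})$ plus a total $m$-difference, substitute the AVF scheme, and invoke (\ref{oddmild}) or (\ref{evenmild}). The only difference is that the paper records the flux $\widetilde\psi$ (resp.\ $\widetilde\phi$) explicitly, whereas you describe the telescoping bookkeeping that generates it without writing it out; that, plus the minor slip of writing $\widetilde H(\bu_{0,0})$ where the argument should be the full stencil vector $\mathbf{u}_0$, does not affect the validity of the proof.
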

\begin{proof}
Equation (\ref{oddmild}) yields a discrete energy conservation law for (\ref{AVF}), namely
\begin{align*}
\!\!D_n\widetilde{H}(\mathbf{u}_0)=&\sum_{i,\alpha}\!(D_n u^\alpha_{i,0})\int_0^1\!\frac{\partial}{\partial U_i^\alpha}\widetilde{H}(\mathbf{U})\Big\vert_{\mathbf{U}=(1-\xi)\mathbf{u}_0+\xi\mathbf{u}_{1}}\!\mathrm{d}\xi=\widetilde{\delta}(\mathbf{u}_0,\mathbf{u}_1)^T(D_n \bu_{0,0})\!\\
&+\!D_m(\widetilde{\psi})=\,\widetilde{\delta}(\mathbf{u}_0,\mathbf{u}_1)^T\widetilde{\mathcal{D}}(\mathbf{u}_0,\mathbf{u}_1)\widetilde{\delta}(\mathbf{u}_0,\mathbf{u}_1)\!+\!D_m(\widetilde\psi)\!=\!D_m(\widetilde{F}),
\end{align*}
where
\begin{equation*}\widetilde{\psi}=\sum_{j\neq 0,\alpha}\frac{j\Delta x}{|j|}\sum_{i=\min\{j,0\}}^{\max\{j,0\}-1}(D_nu_{i,0}^\alpha)S_m^{i-j}\int_0^1\left(\frac{\partial}{\partial U_j^\alpha} \widetilde H(\mathbf U)\right)\Big\vert_{\mathbf{U}=(1-\xi)\mathbf{u_0}+\xi\mathbf{u_1}}\mathrm{d}\xi.
\end{equation*}
Similarly, from (\ref{evenmild}), the conservation law preserved by (\ref{AVF2}) is
\[D_n\widetilde{H}(\mu_m\mathbf{u}_0)=\widetilde{\delta}(\mu_m\mathbf{u}_0,\mu_m\mathbf{u}_1)^T\widetilde{\mathcal{D}}(\mathbf{u}_0,\mathbf{u}_1)\widetilde{\delta}(\mu_m\mathbf{u}_0,\mu_m\mathbf{u}_1)+D_m(\widetilde\phi)=D_m(\widetilde{F}),\]
where
\begin{equation*}\widetilde{\phi}\!=\!\sum_{j\neq 0,\alpha}\!\!\frac{j\Delta x}{|j|}\sum_{i=\min\{j,0\}}^{\max\{j,0\}-1}(D_n\mu_mu_{i,0}^\alpha)S_m^{i-j}\!\!\int_0^1\!\!\left(\!\frac{\partial}{\partial \mu_mU_j^\alpha} \widetilde H(\mu_m\mathbf U)\!\right)\!\!\Big\vert_{\mathbf{U}=(1-\xi)\mathbf{u_0}+\xi\mathbf{u_1}}\!\!\!\!\mathrm{d}\xi.\qed
\end{equation*}
\end{proof}
\begin{rem}
Theorem~\ref{theoavf} holds true in particular when $\widetilde{\mathcal{D}}$ is skew-adjoint.
\end{rem}
\begin{rem}
Condition (\ref{oddmild}) is satisfied if and only if
\begin{equation}\label{testodd}
\mathsf{E}_{\bu_n}\left(\widetilde{\delta}(\mathbf{u}_0,\mathbf{u}_1)^T\widetilde{\mathcal{D}}(\mathbf{u}_0,\mathbf{u}_1)\widetilde{\delta}(\mathbf{u}_0,\mathbf{u}_1)\right)=0,\qquad n=0,1.
\end{equation}
Similarly, condition (\ref{evenmild}) holds true if and only if
\begin{equation}\label{testeven}
\mathsf{E}_{\bu_n}\left(\widetilde{\delta}(\mu_m\bu_0,\mu_m\bu_1)^T\widetilde{\mathcal{D}}(\mathbf{u}_0,\mathbf{u}_1)\widetilde{\delta}(\mu_m\bu_0,\mu_m\bu_1)\right)=0,\qquad n=0,1.
\end{equation} 
Conditions (\ref{testodd}) and (\ref{testeven}) provide simple practical tests for analysing whether the assumptions of Theorem~\ref{theoavf} are satisfied by a given scheme.
\end{rem}
\begin{rem}
The following Hamiltonian-preserving schemes can be obtained from (\ref{AVF}) or (\ref{AVF2}), where the operator $\widetilde{\mathcal D}$ is not skew-adjoint but satisfies (\ref{oddmild}) or (\ref{evenmild}), respectively:
\begin{itemize}
\item[$\bullet$] EC$_8$ and the family of schemes MC$_8$ for KdV in \cite{FCHydon},
\item[$\bullet$] EC$_8$(0) (which preserves a quartic density) and MC$_8$(0) for mKdV in \cite{FCHmkdv},
\item[$\bullet$] EC$_6$ for BBM in \cite{FCHnls}. 
\end{itemize}
\end{rem}

\subsection{Conservation laws of other types and multidimensional domains}\label{genericCL}
Fully-discrete methods that preserve other types of conservation law can be obtained again by using an Euler operator. For simplicity, we restrict the discussion to the case of second-order schemes for a PDE with polynomial nonlinearity. These can be obtained by following the steps below:
\begin{enumerate}
\item Let $P=\prod_{i=1}^r L_i$ be a polynomial of degree $r$ in the semidiscretization, where without loss of generality $L_i$ is a linear approximation of a single monomial factor in the continuous counterpart of $P$. Therefore, $L_i$ can depend on either $\mathbf{U}$ or $\mathbf{U}_t$. Assuming that the stencil has $N$ points in the time dimension, discretize $P$ using 
\begin{equation}\label{fulld}
\sum_{j=0}^{N^r-1} \prod_{i=1}^r \alpha_j L_i(\mathbf{u}_{i_j}), \qquad \alpha_j\in\mathbb{R},
\end{equation}
where $i_j$ is the $i$-th digit of the representation of $N^r-1-j$ in base $N$, ordered from right to left, and setting $i_j=0$ if $N^r-1-j$ has less then $i$ digits. By varying $i_j$ one obtains all possible combinations of $N$ digits of length $r$. At this stage, we assume that the coefficients $\alpha_j$ only satisfy the requirements for consistency.  For example, in a one-step method linear quantities are approximated by
$$L_1(\bU)=L_1(\mu_n\bu_0),\qquad L_1(\bU_t)=L_1(D_n\bu_0),$$
and quadratic quantities by
\begin{align*}L_1L_2\approx&\, \alpha_0L_1(\mathbf{u}_1)L_2(\mathbf{u}_1)+\alpha_1L_1(\mathbf{u}_0)L_2(\mathbf{u}_1)+\alpha_2L_1(\mathbf{u}_1)L_2(\mathbf{u}_0)+\alpha_3L_1(\mathbf{u}_0)L_2(\mathbf{u}_0).
\end{align*}
\item The values of the parameters $\alpha_j$ are obtained by solving
\begin{equation}\label{fulleu}
\mathsf{E}_{\bu}(\widetilde{\mca}\widetilde{\mcq}_\ell)=\mathbf{0}, \qquad \ell=1,2,\ldots,
\end{equation}
where $\widetilde{\mca}$ and $\widetilde{\mcq}_\ell$ are the approximations of the PDE and the characteristic obtained after steps 1 and 2, and $\mathsf{E}_{\bu}$ is the difference Euler operator,
\begin{equation}\label{totdisc}
\mathsf{E}_{\bu}=\sum_{i,j}S_m^{-i}S_n^{-j}\frac{\partial}{\partial u_{i,j}},
\end{equation}
whose kernel consists of all fully-discrete conservation laws \cite{kuper}.
\end{enumerate}
\begin{rem}
In total, net of consistency requirements, for each monomial of degree $r$ one needs:
\begin{itemize}
\item[$\bullet$] $\left(\begin{array}{c} M+r-1\\ r\end{array}\right)$ parameters for the semidiscretization. After solving (\ref{sdeulcond}), only a few of these will still be undetermined.
\item[$\bullet$] $N^r$ new parameters for the full discretization (\ref{fulld}), to be determined by solving (\ref{fulleu}).
\end{itemize}  
By contrast, if one searches directly for all full discretizations without semidscretizing first, the strategy in \cite{FCHydon} introduces $\left(\begin{array}{c} NM+r-1\\ r\end{array}\right)$ variables for each monomial of degree $r$. This in general yields a huge nonlinear system whose solution is impractical.
\end{rem}

The algorithm above can be iterated, to preserve conservation laws for PDEs with more than two independent variables, by discretizing a single variable at each iteration. At the $k$-th iteration the Euler operator in (\ref{totdisc}) is replaced by
$$\mathsf{E}_{\bU}^k=\sum S_{m_1}^{-i_1}S_{m_2}^{-i_2}\ldots S_{m_k}^{-i_k}(-D_{x_{k+1}})^{j_1}(-D_{x_{k+2}})^{j_2}\ldots(-D_{x_d})^{j_k},$$
whose kernel consists in the space of conservation laws of the form
$$D_{m_1}\widetilde{F}_1+D_{m_2}\widetilde{F}_2+\ldots+D_{m_k}\widetilde{F}_k+D_{x_{k+1}}F_{k+1}+\ldots+D_{x_{d}}F_{d}.$$ 
The proof is similar to that of Theorem~\ref{th1}.
\section{The Boussinesq equation}\label{GBsec}
We consider here the (Good) Boussinesq equation
\begin{equation}\label{Bousseq}
{u_{tt}-u_{xx} -(u^2)_{xx}+u_{xxxx}}=0,\quad (x,t)\in [a,b]\times[0,\infty),
\end{equation}
cast as a system of two PDEs
\begin{equation}\label{Bouss}
\mathcal{A}=(u_t-v_x, v_t-u_x-(u^2)_{x}+u_{xxx})=\mathbf{0}.
\end{equation}
This system can be written in Hamiltonian form,
$$D_t\,\bu\equiv D_t\left(\begin{array}{c}u\\v\end{array}\right)=\mathcal{D}(\mathcal{E}(H)),$$
with
\begin{equation*}
\mathcal{D}=\left[\begin{array}{cc}
0&D_x\\D_x&0
\end{array}\right],\qquad H=\tfrac{1}3u^3+\tfrac{1}2(v^2+u^2+u_{x}^2).
\end{equation*} 
System (\ref{Bouss}) has infinitely many independent conservation laws \cite{PC}. The first four are $D_xF_i+D_tG_i=0$, with 
\begin{align}\label{CL1}
F_1=& -v,\qquad G_1=u,\\\label{CL2}
F_2=& u_{xx}-u-u^2,\qquad G_2=v,\\\label{CL3}
F_3=& uu_{xx}-\tfrac{1}2(v^2+u^2+u_x^2)-\tfrac{2}3u^3,\qquad G_3=uv,\\\label{CL4}
F_4=& vu_{xx}-u_xu_t-uv-u^2v,\qquad G_4=H.
\end{align}
with characteristics
\begin{equation}\label{dischar}
\mathcal{Q}_1=(1,0)^T, \quad\mathcal{Q}_2=(0,1)^T, \quad \mathcal{Q}_3=(v,u)^T,\quad \mathcal{Q}_4=(u+u^2-u_{xx},v)^T,
\end{equation}
respectively. When the boundary conditions are conservative (e.g. periodic), integrating in space (\ref{CL1})--(\ref{CL4}) gives the preservation of the following invariants,
\begin{equation}\label{invar}
I_1=\int u\,\mathrm{d}x,\qquad I_2=\int v\,\mathrm{d}x, \qquad I_3=\int uv\,\mathrm{d}x, \qquad I_4=\int H\,\mathrm{d}x;
\end{equation}
here $I_3$ and $I_4$ are the global momentum and the global energy, respectively.
\subsection{Conservative methods for the Boussinesq equation}
We look for semidiscretizations of (\ref{Bouss}) of the form 
\begin{equation}\label{sdgen}
\widetilde{\mathcal{A}}:=(D_m \widetilde{F}_1+D_t \widetilde{G}_1, D_m \widetilde{F}_2+D_t \widetilde{G}_2)=\mathbf{0}.
\end{equation}
Solutions of (\ref{sdgen}) satisfy semidiscrete versions of the conservation laws (\ref{CL1}) and (\ref{CL2}) with $\widetilde{\mcq}_1=\mcq_1$ and $\widetilde{\mcq}_2=\mcq_2$.
The linear and quadratic terms in (\ref{Bouss}) and (\ref{dischar}) are approximated as
\begin{align*}
\sum_{i=A}^B \alpha_i Z_i, \qquad \sum_{i=A}^B\sum_{j=i}^B \beta_{i,j}Z_iZ_j, 
\end{align*}
respectively, where $Z_i\in \{U_i, V_i, D_t U_i, D_t V_i\}$, and the coefficients $\alpha_i$ and $\beta_{i,j}$ are chosen by requiring the desired order of accuracy and the preservation of the conservation law of either the momentum (\ref{CL3}) or the energy (\ref{CL4}), according to the strategy outlined in Section~\ref{spacedis}. We have not found any semidiscrete scheme that preserves both of these conservation laws, as the constraints on the approximations of the nonlinear terms that we have obtained from (\ref{sdeulcond}) are not compatible with each other.

In the following, we present the components of \eqref{sdgen} and the characteristic $\widetilde{\mcq}_{3/4}$ and density $\widetilde{G}_{3/4}$ of the remaining preserved conservation law. The corresponding flux $\widetilde{F}_{3/4}$ does not contribute to our global error estimates; in most cases, it has many terms and gives little insight, so we omit this.

Fully-discrete schemes that preserve the local momentum or the local energy are then obtained by applying Gauss-Legendre method or AVF method, respectively. As these are Runge--Kutta methods, the conservation laws (\ref{CL1}) and (\ref{CL2}) are preserved as a consequence of Corollary~\ref{cor1}.

\subsubsection*{Second-order schemes}\label{IIord}
Here we introduce new families of second-order schemes that preserve three conservation laws. The stencil in space consists of four points and we set $A=-1$ and $B=2$ in (\ref{nodesx}). All the free parameters in the formulae below $(\alpha,\beta,\gamma,\xi)$ are $O(\Delta x^2).$ Free parameters corresponding to higher degree perturbations are set equal to zero as their contribution is negligible.\\[1\baselineskip]
\textit{Momentum-conserving schemes}\\[1\baselineskip]
We have obtained six families of semidiscretizations that preserve the conservation law (\ref{CL3}), split by two different forms of the characteristic and the three parameter values $s\in\{0,1/3,1/2\}$.

The first three families are
\begin{align}\label{MC2}
\widetilde{F}_1=&-(V_0+\alpha D_m^2V_{-1}),\qquad \widetilde{G}_1=\mu_m(U_0+s{\Delta x^2}D_m^2U_{-1}), \\\nonumber
\widetilde{F}_2=&\,(1+\beta)D_m^2 U_{-1}-U_0-(U_0+\tfrac{(1-s)\Delta x^2}{3-5s}D_m^2U_{-1})(U_0+\tfrac{s\Delta x^2}{5s-1}D_m^2U_{-1})\\\nonumber
&+\{\tfrac{s\Delta x^2}{s+1}+\xi(3s-1)(1-2s)\}\{D_m^2(U_{-1}^2)-(D_mU_{-1})(D_mU_{0})\},\\\nonumber
\widetilde{G}_2\!=&\mu_m\!\left(V_0\!+\!\gamma D_m^2V_{-1}\right)\!,\quad\!
\widetilde{G}_3\!=\!\widetilde G_1\widetilde G_2,\quad \! \widetilde{\mcq}_3\!=\!(\widetilde G_2,\widetilde G_1)^T\!.
\end{align}

Three remaining are given by $\widetilde{F}_1, \widetilde{F}_2$ and $\widetilde{G}_2$ as in (\ref{MC2}) together with
\begin{align*}
\widetilde{G}_1=&\,\mu_m\left(U_0+\gamma D_m^2U_{-1}\right),\\ \widetilde{\mcq}_3=&\,\left(\mu_m(V_0+s{\Delta x^2}D_m^2V_{-1}),\,\mu_m(U_0+s{\Delta x^2}D_m^2U_{-1})\right)^T\!\!,\\
\widetilde{G}_3=&\,(\mu_mU_0)(\mu_mV_0)\!-\!(\gamma+s{\Delta x^2})\mu_m\{(\Dc_mU_{0})(\Dc_mV_{0})\}+{s\gamma\Delta x^2}(D_m\Dc_mU_{0})(D_m\Dc_mV_{0}).
\end{align*}

In the numerical tests section we limit our investigations to the semidiscretions obtained from (\ref{sdgen}) with (\ref{MC2}) and $s=\alpha=\beta=\xi=0$, $\gamma=\lambda_1\Delta x^2$; we use MC$_2(\lambda_1)$ to denote the family of finite difference schemes obtained by using the symplectic implicit midpoint method (Gauss-Legendre method of order two) to discretize in time. The iterative technique described in Section~\ref{genericCL} also finds these schemes, but no others.
\bigskip

\noindent\textit{Energy conserving schemes}
\medskip

There is only one family of semidiscretizations of the form (\ref{sdgen}) that preserves the local conservation law of the energy. For this family, $\widetilde{F}_1$ and $\widetilde{G}_2$ are defined as in (\ref{MC2}) and
\begin{align}\nonumber
\widetilde{G}_1=&\,\mu_mU_0,\,\,\, \widetilde{F}_2=(1\!+\!\beta)D_m^2 U_{-1}\!-\!U_0\!-\!(\mu_m^2U_{-1})^2,\,\,\, \widetilde{\mcq}_4=(-\mu_m \widetilde F_2,-\mu_m \widetilde F_1)^T\\
\widetilde{G}_4=&\,\tfrac{1}2\left\{(\mu_m U_0)^2+(1+\beta)\mu_m\big((\Dc_mU_{0})^2\big)\right\}\\\nonumber
&+\!\tfrac{1}2\left\{\mu_m(V_{0}+\alpha D_m^2V_{-1})\mu_m(V_{0}+\gamma D_m^2V_{-1})\right\}\!+\!\tfrac{1}3(\mu_mU_0)\mu_m\big((\mu_m^2 U_{-1})^2\big).
\end{align}
The resulting system of ODEs can be written in the form 
\begin{equation}\label{Hamode}
D_t\left(\begin{array}{c} \mu_m U_0\\ \mu_m V_0\end{array}\right)=
\widetilde{\mathcal D}\left(\begin{array}{c} \mathsf{E}_{\mu_m \bU} (\widetilde{H})\\ \mathsf{E}_{\mu_m \mathbf V} (\widetilde{H})\end{array}\right),
\end{equation}
with 
\[ \widetilde{H}=\widetilde{G}_4,\qquad \widetilde{\mathcal D}=\left(\begin{array}{cc} 0 & \widetilde{D}_x\\ \widetilde{D}_x & 0\end{array}\right),\qquad \widetilde D_x=D_m(\mu_m+\gamma D_m\Dc_mS_m^{-1})^{-1}.\]
The operator $\widetilde{\mathcal D}$ is not skew-adjoint, but satisfies (\ref{evenmild}). Therefore, by applying the AVF method (\ref{AVF2}) to (\ref{Hamode}) we obtain a family of fully-discrete schemes that preserve the local conservation law of the energy. We use EC$_2(\lambda_2)$ to denote the schemes with $\alpha=\gamma=0$ and $\beta =\lambda_2\Delta x^2$. Again, the iterative technique from Section~\ref{genericCL} yields only these schemes.

\subsubsection*{Fourth-order schemes}\label{IVord}
The families of fourth-order schemes introduced here preserve three conservation laws and depend on free parameters ($\alpha,\beta,\gamma,\xi$) that are all $O(\Delta x^4)$. Parameters introducing only perturbations of higher degree are set equal to~zero.

For each of the semidiscretizations introduced in this section, the discrete fluxes $\widetilde{F}_j$ are second-order accurate, but $D_m \widetilde F_j$ and the three preserved conservation laws,
$$\widetilde\mca\widetilde\mcq_j=D_m \widetilde F_j + D_t \widetilde G_j,$$
are approximated with fourth-order accuracy.
\bigskip

\noindent\textit{Momentum-conserving schemes}
\medskip

On a spatial stencil with six points ($A=-2$ and $B=3$ in (\ref{nodesx})) there are two families of semidiscretizations that preserve the local momentum conservation law. Let
\begin{align}\nonumber
\Phi(Z;p):=&\,Z_{-1}+p\Delta x^2D_m^2Z_{-2},\\\nonumber
\!\Theta(Z;p_1,p_2,p_3,p_4,p_5,p_6):=&D_m^2\Phi(Z;p_1)(D_m^2\Phi(Z;p_2))\!+\!p_3(S_m\Phi(Z;p_4))D_m^4Z_{-2}\\\nonumber
&\,+p_5D_m\mu_m(\Phi(Z;p_6))D_m^3\mu_mZ_{-2}.
\end{align}

The first family of semidiscretizations and their preserved conservation laws is given by
\begin{align}\label{MC4}
\widetilde{F}_1=&-\!(V_0\!-\!\tfrac{\Delta x^2}{24}D_m^2V_{-1}\!+\!\alpha D_m^4V_{-2}),\,\,\, \widetilde{G}_1=\!\mu_m(U_0\!-\!\tfrac{\Delta x^2}8\!D_m^2U_{-1}),\\\nonumber
\widetilde{F}_2=&\,D_m^2U_{-1}-U_0-U_0^2+\tfrac{\Delta x^2}{24}D_m^2(U_{-1}+U_{-1}^2-3D_m^2U_{-2})+\beta D_m^4U_{-2}\\\nonumber
&-\tfrac{\gamma}2\Theta(U;-\tfrac{1}{4},-\tfrac{3}{8},-2,-\tfrac{3}{16},-2,-\tfrac{1}{16})+\tfrac{7\Delta x^4}{192}\Theta(U;0,0,\tfrac{8}7,-\tfrac{3}{8},0,0),\\\nonumber
\widetilde{G}_2=&\,\mu_m(V_0-\tfrac{\Delta x^2}8D_m^2 V_{-1}+\xi D_m^4V_{-2}),\quad \widetilde{\mcq}_3=(\widetilde{G}_2,\widetilde{G}_1)^T,\quad \widetilde{G}_3=\widetilde G_1\widetilde G_2.
\end{align}

The second family has $\widetilde{F}_1,\widetilde{F}_2$ and $\widetilde{G}_2$ as in (\ref{MC4}), together with
\begin{align*}
\widetilde{G}_1=&\,\mu_m(U_0\!-\!\tfrac{\Delta x^2}8D_m^2U_{-1}\!+\!\xi D_m^4U_{-2}),\\
\widetilde{\mcq}_3=&\,\big(\mu_m(V_0\!-\!\tfrac{\Delta x^2}8D_m^2V_{-1}),\mu_m(U_0\!-\!\tfrac{\Delta x^2}8D_m^2U_{-1})\big)^T\!,\\
\widetilde{G}_3=&\,\{\mu_m(U_0\!-\!\tfrac{\Delta x^2}8D_m^2U_{-1})\}\{\mu_m(V_0\!-\!\tfrac{\Delta x^2}8D_m^2V_{-1})\}\\
&+\xi(D_m\Dc_mU_{0})(D_m\Dc_mV_{0})+\tfrac{\xi\Delta x^2}8\mu_m\{(D_m^2\Dc_mU_{-1})(D_m^2\Dc_mV_{-1})\}.
\end{align*}

We use MC$_4(\lambda_3)$ to denote the schemes obtained by applying the Gauss-Legendre method of order four to (\ref{MC4}), with $\alpha=\beta=\gamma=0$ and $\xi=\lambda_3\Delta x^4$.

\bigskip
\noindent\textit{Energy conserving schemes}
\medskip

On the most compact six-point stencil, there are no semidiscretizations of the form (\ref{sdgen}) that preserve the local conservation law for energy. However, a seven-point stencil ($B=-A=3$ in (\ref{nodesx})) is more fruitful. For $n\in\mathbb{N}$, let
$$ \varphi_n(k)=\begin{cases}\lceil \tfrac{n}2\rceil, & \text {if}\quad k\geq \tfrac{n}2,\\
k+1, & \text {if}\quad k< \tfrac{n}2, \end{cases}$$
and define the operators
$$\nu_m^\pm=I\pm\tfrac{\Delta x^2}6D_m^2S_m^{-1}$$
and the functions
\begin{align*}
\widehat{F}_1=&\, -(\nu_m^- V_{-1}+\alpha D_m^4V_{-3}),\\
\widehat{F}_2=&\, D_m^2 U_{-2}-\nu_m^-U_{-1}-(\nu_m^+ U_{-1})^2+(\beta-\tfrac{\Delta x^2}4)D_m^4U_{-3}\\
&+\tfrac{\Delta x^2}6\left\{2(\nu_m^+ U_{-1})(D_m^2\nu_m^+ U_{-2})+D_m^2\big((\nu_m^+ U_{-2})^2\big)\right\}.
\end{align*}
The family of semidiscretizations and conservation laws is as follows:
\begin{align}\label{EC4}
\widetilde{F}_1=&\, \mu_m\widehat{F}_1,\qquad \widetilde{G}_1=U_0,\\\nonumber
\widetilde{F}_2=&\, \mu_m\widehat{F}_2,\qquad \widetilde{G}_2=\,V_0+\gamma D_m^4V_{-2},\\\nonumber
\widetilde \mcq_4=&\,(-\nu_m^+S_m\widehat{F}_2,-\nu_m^+S_m\widehat{F}_1)^T,\\\nonumber
\widetilde G_4=&\,\tfrac{1}2\{(V_0+\gamma D_m^4V_{-2})\nu_m^+(\nu_m^-V_0+\alpha D_m^4V_{-2})\\\nonumber
&+\mu_m\{(D_mU_{-1})D_m\nu_m^+(U_{-1}-\tfrac{\Delta x^2}4D_m^2U_{-2})\}+U_0\nu_m^+(\nu_m^-U_0-\beta D_m^4U_{-2})\}\\\nonumber
&+\tfrac{1}3U_0\nu_m^+\left((\nu_m^+U_0)^2-\tfrac{\Delta x^2}6\left(2(\nu_m^+U_0)(D_m^2\nu_m^+U_{-1})+D_m^2((\nu_m^+U_{-1})^2)\right)\right).
\end{align}
The systems of ODEs defined by (\ref{sdgen}) with (\ref{EC4}) amounts to
\begin{equation}\label{Hamode4}
D_t\left(\begin{array}{c} U_0\\ V_0\end{array}\right)=
\widetilde{\mathcal D}\left(\begin{array}{c} \mathsf{E}_{\bU} (\widetilde{H})\\ \mathsf{E}_{ \mathbf V} (\widetilde{H})\end{array}\right),
\end{equation}
with 
\[ \widetilde{H}=\widetilde{G}_4,\qquad \widetilde{\mathcal D}=\left(\begin{array}{cc} 0 & \widetilde{D}_x\\ \widetilde{D}_x & 0\end{array}\right),\qquad \widetilde D_x=D_m\mu_m(S_m\nu_m^++\gamma D_m^4S_m^{-1}\nu_m^+)^{-1}.\]
The operator $\widetilde{\mathcal D}$, although not skew-adjoint, satisfies (\ref{oddmild}). We use EC$_4(\lambda_4)$ to denote the family of schemes obtained by applying the AVF method of order four (see \cite{McLaren}) to (\ref{Hamode4}) with $\alpha =\lambda_4\Delta x^4$ and $\beta=\gamma=0$.
\section{Numerical Tests}\label{Numsec}
In this section we solve a couple of benchmark problems to show the effectiveness and conservation properties of the numerical methods in Section~\ref{GBsec}. 

The results are compared with the following second-order structure-preserving methods:
\begin{itemize}
\item[$\bullet$] The multisymplectic scheme for (\ref{Bousseq}),
\begin{align*}
\text{PS}\equiv &\,D_n^2\mu_m^4u_{-2,-1}-D_m^2\mu_m^2\mu_n^2u_{-2,-1}-D_m^2\mu_m\mu_n(\mu_m\mu_n u_{-2,-1})^2+D_m^4\mu_n^2u_{-2,-1}=0,
\end{align*}
developed in \cite{Zeng} and equivalent to the well-known Preissmann scheme.
\item[$\bullet$] The symplectic scheme for (\ref{Bouss}) in \cite{Chen},
\begin{align*}
\text{MP}:= \big(&\,D_n u_{0,0}-\Dc_m\mu_n v_{0,0},D_n v_{0,0}+\Dc_m(D_m^2\mu_n u_{-1,0}- (\mu_n u_{0,0})^2-\mu_n u_{0,0}   \,\big)=\mathbf{0},
\end{align*}
obtained by applying the midpoint rule to a suitable spatial discretization.
\item[$\bullet$] The energy-conserving scheme for (\ref{Bouss}) in \cite{Matsuo}, 
\begin{align*}
\text{DVD}:=  \big(&\,D_n u_{0,0}-D_m\mu_n v_{0,0},\, D_n v_{0,0}\\&\!+\!D_m(D_m^2\mu_n u_{-2,0}\!-\!\tfrac{1}3 (u_{-1,0}^2\!+\!u_{-1,0}u_{-1,1}\!+\!u_{-1,1}^2) \!-\!\mu_n u_{-1,0} )\big)\!=\!\mathbf{0},
\end{align*}
obtained using a discrete variational derivative method. This scheme can be obtained also by applying the AVF method to the Hamiltonian system of ODEs defined by 
$$ \widetilde{\mathcal D}=\left(\begin{array}{cc} 0 & D_mS_m^{-1}\\ D_m & 0\end{array}\right), \qquad H=\tfrac{1}2(U_{0}^2+V_{0}^2+\mu_m((D_mU_{-1})^2)+\tfrac{1}3U_{0}^3.$$
\end{itemize}
To the best of our knowledge, there are no schemes in the literature
for the Boussinesq equation that are fourth-order accurate in both space and time.
Therefore, we compare the performance of the fourth-order schemes in Section~\ref{GBsec} with the following finite difference scheme for (\ref{Bousseq}) introduced in \cite{Ismail}:
\begin{align*}
\text{FD}_4:=\!(1\!+\!\tfrac{\Delta x^2}{12}D_m^2S_m^{-1})^2D_n^2 u_{0,0}\!-\!D_m^2\mu_n \left\{ (1\!+\!\tfrac{\Delta x^2}{12}D_m^2S_m^{-1})(\mu_nu_{-1,0}\!+\!(\mu_nu_{-1,0})^2)\!-\!D_m^2\mu_nu_{-2,0}\right\}\!.
\end{align*}
The scheme FD$_4$ is fourth-order accurate in space and second-order accurate in time, so to have a fair comparison we will use this scheme with a time step equal to $\Delta t^2$.

We consider $(x,t)\in \Omega\equiv [a,b]\times [0,T]$ and periodic boundary conditions. We introduce on $\Omega$ a grid with $I+1$ nodes, $x_i$, in space and $J+1$ nodes, $t_j$, in time. Henceforth subscripts denote shifts from the point $(x_0,t_0)=(a,0)$ (e.g., $u_{i,j}\simeq u(a+i\Delta x,j\Delta t)$).

As the computational time is similar for all the schemes of the same order of accuracy, our comparisons are based on the error in the solution at the final time $t = T$, evaluated as
\begin{equation}\label{relerr}
\left.\frac{\|u-u_\mathrm{exact}\|}{\|u_\mathrm{exact}\|}\right|_{t=T},
\end{equation}
where $\|\cdot\|$ denotes the Euclidean norm.
We also compare the errors in the global invariants (\ref{invar}) defined as
$$\text{Err}_\alpha\!=\!\Delta x\! \max_{j=1,\ldots,J}\left|\sum_{i=0}^I\left(\widetilde{G}_\alpha\Big\vert_{U_m=u_{m+i,j},V_m=v_{m+i,j}}\!-\widetilde{G}_\alpha\Big\vert_{U_m=u_{m+i,0},V_m=v_{m+i,0}}\right)\right|,$$
where $\alpha=1,2,3,4.$ For the methods introduced in this paper, $\widetilde{G}_\alpha$ is given in Section~\ref{GBsec}. For all the other schemes, we set
$$\widetilde{G}_1\!=\!U_0,\quad\! \widetilde{G}_2\!=\!V_0,\quad\! \widetilde{G}_3\!=\! U_{0}V_{0},\quad\! \widetilde{G}_4\!=\!\tfrac{1}2(U_{0}^2\!+\!V_{0}^2\!+\!\mu_m((D_mU_{-1})^2)\!+\!\tfrac{1}3U_{0}^3\!.$$
\subsubsection*{Single soliton}
For the first problem we set $\Omega=[-60,60]\times[0,25]$ and the initial conditions given by the single soliton solution over $\mathbb{R}$,
\begin{equation*}
u_{\mathrm{exact}}(x,t)\!=\!-\tfrac{3p^2}2\sech^2\!\left(\tfrac{p}2(x\!-\!ct\!+\!d)\right),\quad\! v_{\mathrm{exact}}(x,t)\!=\!\tfrac{3cp^2}2\sech^2\!\left(\tfrac{p}2(x\!-\!ct\!+\!d)\right)\!,
\end{equation*}
where $c=\sqrt{1-p^2}.$ We choose $$p=\frac{1}{\sqrt{3}}, \qquad d=10.$$ \begin{table}[htb]
\caption{Single soliton problem: $\pi(\Delta x)$ for each scheme}\label{tabord}
\small
\centerline{\begin{tabular}{|c|c|c|c|c|c|c|c|c|}
\hline
$\Delta x=\Delta t$ &  0.2 & 0.3  & 0.4   & 0.5 & 0.6 & 0.7 & 0.8 & 0.9   \\
\hline
MC$_2$(0) & 1.99 & 2.00 & 1.97 & 1.92 & 1.96 & 1.94 & 1.70 & 2.00 \\
EC$_2$(0) & 1.99 & 1.97 & 1.94 & 1.94 & 1.91 & 1.88 & 1.66 & 1.88 \\
MC$_4$(0) & 4.00 & 3.99 & 3.98 & 3.96 & 3.95 & 3.99 & 3.80 & 3.94 \\
EC$_4$(0) & 4.01 & 4.00 & 4.05 & 4.13 & 4.06 & 4.21 & 4.32 & 4.26 \\
\hline
\end{tabular}}
\end{table}
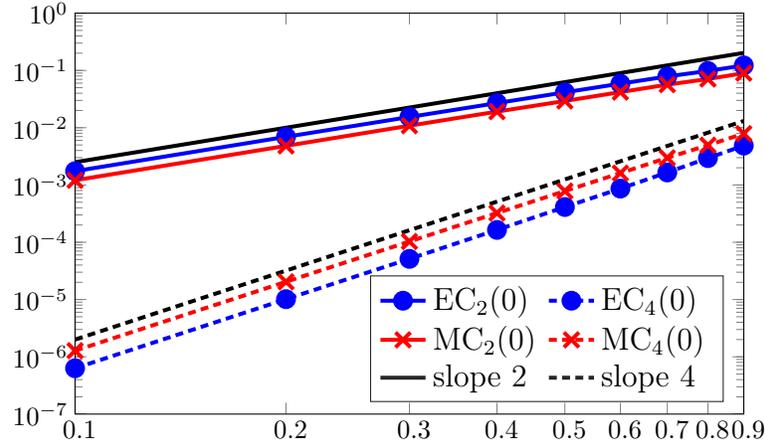
\begin{figure}[htbp]
\begin{center}
	\pgfplotsset{
  log x ticks with fixed point/.style={
      xticklabel={
        \pgfkeys{/pgf/fpu=true}
        \pgfmathparse{exp(\tick)}%
        \pgfmathprintnumber[fixed relative, precision=3]{\pgfmathresult}
        \pgfkeys{/pgf/fpu=false}
      }
  },
}
\begin{tikzpicture}

\begin{axis}[%
width=3.5in,
height=2.1in,
at={(1in,10in)},
scale only axis,
xmode=log,
xmin=0.1,
xmax=0.9,
log x ticks with fixed point,
xtick={0.1,0.2,0.3,0.4,0.5,0.6,0.7,0.8,0.9},
xminorticks=true,
ymode=log,
ymin=1e-07,
ymax=1,
yminorticks=true,
axis background/.style={fill=white},
legend columns=2,
legend style={legend pos=south east, legend cell align=left, align=left, draw=white!15!black,text width=3.5em}
]
\addplot [color=blue, line width=1.5pt, mark size=3.0pt, mark=*, mark options={solid, blue}]
  table[row sep=crcr]{%
0.1 0.001744352482318\\
0.2 0.006932809282745\\
0.3	0.015394815532481\\
0.4 0.026901540607160\\
0.5	0.0414870103342257\\
0.6 0.058751742899246\\
0.7 0.078467370171848\\
0.8 0.097984886423440\\
0.9	0.122279591383913\\
};
\addlegendentry{EC$_2$(0)}

\addplot [color=blue, line width=1.5pt, mark size=3.0pt, mark=*, mark options={solid, blue}, densely dashed]
  table[row sep=crcr]{%
0.1 6.310163625023687e-07\\
0.2 1.014844691242605e-05\\
0.3 5.128995782971570e-05\\
0.4 1.642695585517801e-04\\
0.5	0.000412493956780063\\
0.6 8.651670551326446e-04\\
0.7 0.001655553925816\\
0.8 0.002948870766846\\
0.9	0.004870595526870\\
};
\addlegendentry{EC$_4$(0)}

\addplot [color=red, line width=1.5pt, mark size=4.0pt, mark=x, mark options={solid, red}]
  table[row sep=crcr]{%
0.1 0.001209761813698\\
0.2 0.004819513078445\\
0.3 0.010845910281583\\
0.4 0.019103698883414\\
0.5	0.0292950002010354\\
0.6 0.041880187917499\\
0.7 0.056456556421426\\
0.8 0.070841843339690\\
0.9	0.089698515736793\\
};
\addlegendentry{MC$_2$(0)}

\addplot [color=red, line width=1.5pt, mark size=4.0pt, mark=x, mark options={solid, red}, densely dashed]
  table[row sep=crcr]{%
0.1 1.278755235370749e-06\\
0.2 2.041567445049071e-05\\
0.3 1.030454250181083e-04\\
0.4 3.238977725264476e-04\\
0.5	0.000784339380931381\\
0.6 0.001612047955882\\
0.7 0.002980295333747\\
0.8 0.004951959701356\\
0.9	0.007880113942680\\
};
\addlegendentry{MC$_4$(0)}

\addplot [line width=1.5pt]
  table[row sep=crcr]{%
0.1	0.0025\\
0.3	0.0225\\
0.5	0.0625\\
0.7 0.1225\\
0.9	0.2025\\
};
\addlegendentry{slope 2}

\addplot [line width=1.5pt, densely dashed]
  table[row sep=crcr]{%
0.1 0.000002\\
0.3 0.000162\\
0.5	0.00125\\
0.7 0.004802\\
0.9	0.013122\\
};
\addlegendentry{slope 4}

\end{axis}

\end{tikzpicture}%
\end{center}
\caption{Single soliton problem: plot of $\pi(\Delta x)$ showing convergence of the schemes.}
	\label{ord1}
\end{figure}
We first examine the convergence and stability of the schemes found in Section~\ref{GBsec}, setting all parameters to zero. The order of convergence at various step sizes is measured by 
$$\pi(\Delta x)=\frac{(k-1)\log(\mathrm{error}_{k}/\mathrm{error}_{k-1})}{k}, \qquad \text{where}\ k=10\Delta x,$$
and error$_k$ denotes the error obtained from (\ref{relerr}) with $\Delta x=\Delta t=k/10$, for $k=1,2,\ldots,9$.
The results in Table~\ref{tabord} and Figure~\ref{ord1} show that all the methods tend to the exact solution with the expected maximum order of convergence as the grid is refined, and are stable also for the largest stepsizes.
\begin{table}
\caption{Single soliton problem with $\Delta x=0.5$ and $\Delta t=0.5$ (except $\text{FD}_4$).}
\label{tab1}       
\centerline{\begin{tabular}{cccccc}
\hline\noalign{\smallskip}
Method &  $\text{Err}_1$ & $\text{Err}_2$ & $\text{Err}_3$  & $\text{Err}_4$  & Sol. Err.  \\
\noalign{\smallskip}\hline\noalign{\smallskip}
$\mbox{MC}_2(0)$	& 6.22e-15 & 6.22e-15 & 1.22e-15 & 7.90e-04 & 0.0293\\
$\mbox{MC}_2(-0.21)$	& 5.33e-15  & 1.33e-15 & 1.22e-15 & 3.77e-04 & 0.0059\\
$\mbox{EC}_2(0)$	& 4.44e-15   & 4.00e-15 & 1.08e-05 & 6.66e-16 & 0.0415 \\
$\mbox{EC}_2(-0.20)$	& 3.55e-15   & 3.55e-15 & 2.63e-06 & 1.22e-15 & 0.0062 \\
PS	& 5.88e-16 & 0.0194 & 5.79e-04 & 0.0013 & 0.0238\\
MP	&  5.77e-15  & 4.89e-15 & 2.77e-04 & 0.0030 & 0.0706 \\
DVD	& 5.77e-15  & 4.00e-15 & 0.0053 &  3.44e-15 & 0.0740\\
$\mbox{MC}_4(0)$	& 5.77e-15  & 5.77e-15 & 2.00e-15 &  3.18e-05 & 7.84e-04\\
$\mbox{MC}_4(0.06)$	& 5.77e-15  & 9.33e-15 & 2.89e-15 &  8.50e-06 & 1.23e-04 \\
$\mbox{EC}_4(0)$	& 6.66e-15  & 1.36e-11 & 4.02e-08 & 5.03e-14 & 4.12e-04\\
$\mbox{EC}_4(0.03)$	& 6.21e-15  & 1.57e-12 & 1.47e-08 &  5.57e-14 & 1.94e-04\\
$\mbox{FD}_4$\,\,($\Delta t=0.5^2$)	& 9.97e-12  & 0.0036 & 1.07e-04 &  2.84e-04 & 0.0038\\
\noalign{\smallskip}\hline
\end{tabular}}
\end{table}

Table~\ref{tab1} shows the error in the conservation laws and the solution for the different methods with $\Delta x=\Delta t=0.5$. For this problem, the values of the free parameters that minimize the error in the solution of MC$_2(\lambda_1)$, EC$_2(\lambda_2)$, MC$_4(\lambda_3)$,  EC$_4(\lambda_4)$ are $\lambda_1=-0.21,$ $\lambda_2=-0.20$, $\lambda_3=0.06$, $\lambda_4=0.03$. Such optimization is easy given that the solution is known, but is not currently feasible more generally. Therefore, the results obtained by setting the above parameters to zero are shown for comparison. This benchmark test illustrates that:
\begin{itemize}
\item[$\bullet$] All schemes preserve the first conservation law, but only those based on the formulation (\ref{Bouss}) preserve the second conservation law;
\item[$\bullet$] The schemes introduced in this paper preserve three conservation laws up to machine accuracy;
\item[$\bullet$] The new second-order schemes compare favourably with the methods from the literature in terms of accuracy in both the solution and the invariants that are not exactly conserved, even without optimising the parameters;
\item[$\bullet$] Choosing the optimal value of the free parameter, the error in the solution is roughly four times smaller (or more) than any other second-order method;
\item[$\bullet$] The fourth-order methods are all more much accurate than FD$_4$ with timestep $\Delta t=0.5^2$, even for non-optimal parameters.
\end{itemize}
\begin{figure}
\begin{center}
	\input{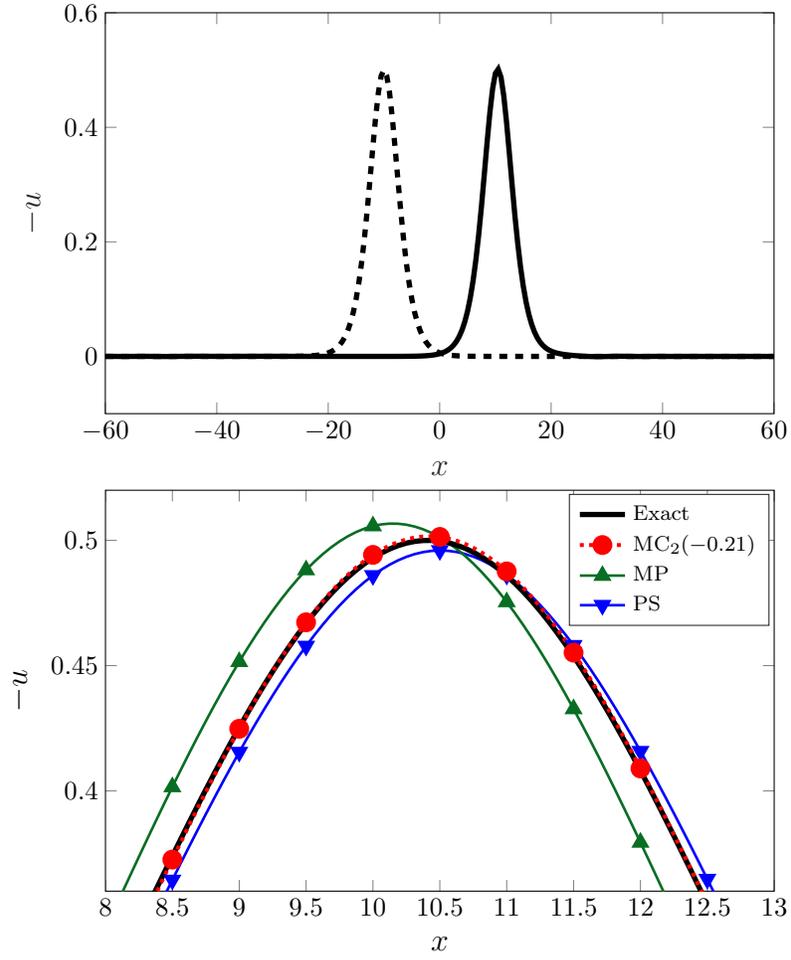}
\end{center}
\caption{Top: initial condition (dashed line) and solution of MC$_2(-0.21)$ at $T=25$ (solid line). Bottom: comparison of different schemes around the top of the soliton.}
	\label{1solfig}
\end{figure}
\begin{figure}[htbp]
\begin{center}
	\input{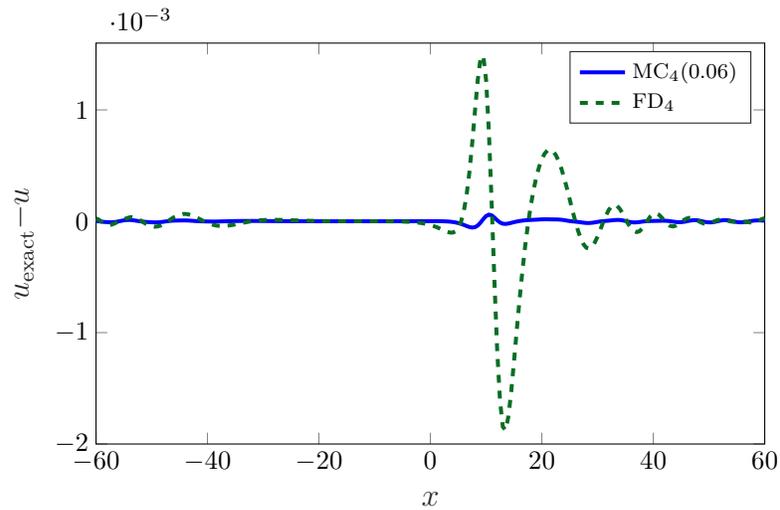}
\end{center}
\caption{Single soliton: error of fourth-order schemes.}
	\label{1solerr}
\end{figure}

In Figure~\ref{1solfig}, the upper plot shows the solution obtained by the most accurate scheme, MC$_2(-0.21)$. The motion of the soliton does not produce any spurious oscillations. These can be seen (with amplitude of about $10^{-2}$) in the solutions of MP and DVD.

The lower plot shows the exact solution and the solution of MC$_2(-0.21)$ compared to the solutions of MP and PS around the top of the soliton (we omit the solution of DVD as it is the least accurate). The approximate solutions have been reconstructed using cubic spline interpolation of the values at the grid points denoted with markers. This figure shows how well the solution of MC$_2(-0.21)$ matches both the phase and the amplitude of the soliton. 

Figure~\ref{1solerr} shows the difference between the exact solution and the solutions of MC$_4$(0.06) and FD$_4$ (with $\Delta t=0.5^2$). The error in MC$_4$ is roughly 30 times smaller; is located mainly at the peak of the soliton, and can be ascribed to a small phase error. The error of FD$_4$ is more widespread.
\subsubsection*{Interaction of two solitons}
We now study the interaction of two solitons over $\Omega=[-150,150]\times[0,50]$. The exact solution over $\mathbb{R}$ is \cite{MMM},
\begin{equation}\label{ex2sol}
u_{\text{exact}}=-6D_x^2\log \omega(x,t),\quad \omega(x,t)=1+\exp(\eta_1)+\exp(\eta_2)+A\exp(\eta_1+\eta_2),
\end{equation}
where
$$\eta_j=p_j(x-c_jt+d_j),\quad c_j=(-1)^j\sqrt{1-p_j^2},\quad A=\frac{(c_1-c_2)^2-3(p_1-p_2)^2}{(c_1-c_2)^2-3(p_1+p_2)^2}.$$
\begin{figure}[htbp]
\begin{center}
  \includegraphics[width=4.2in,height=5.7cm]{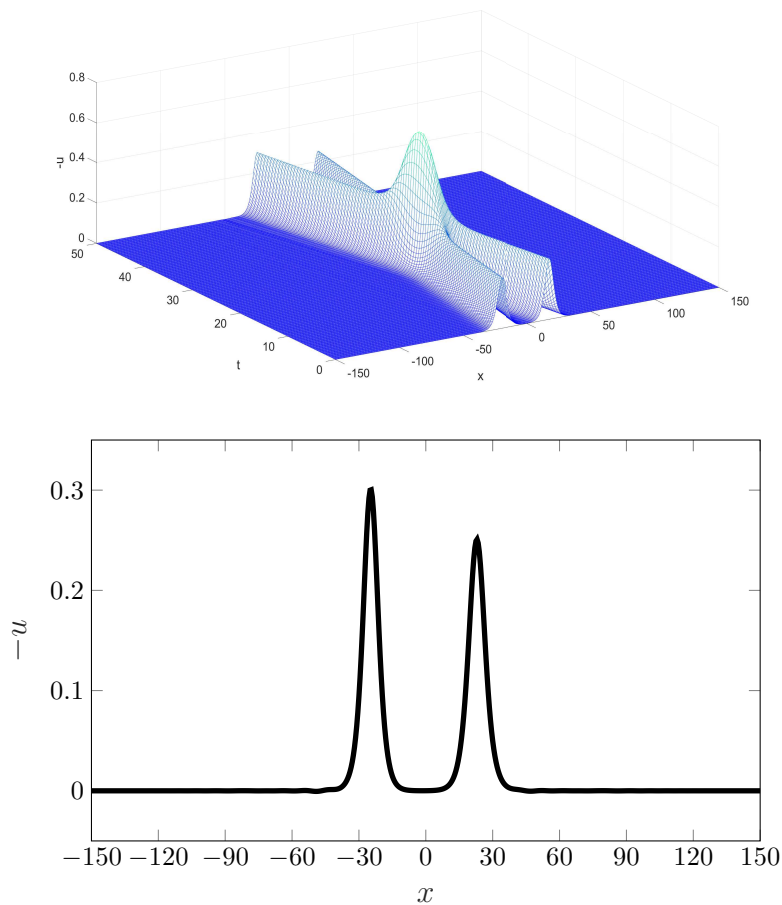}\vspace{0.4cm}
  \input{2sol.tex}
\end{center}
\caption{Solution of EC$_2(-0.18)$ on $\Omega$ and at the time $T=50$. }\label{2sol}
\end{figure}
\begin{figure}[htbp]
\begin{center}
	\input{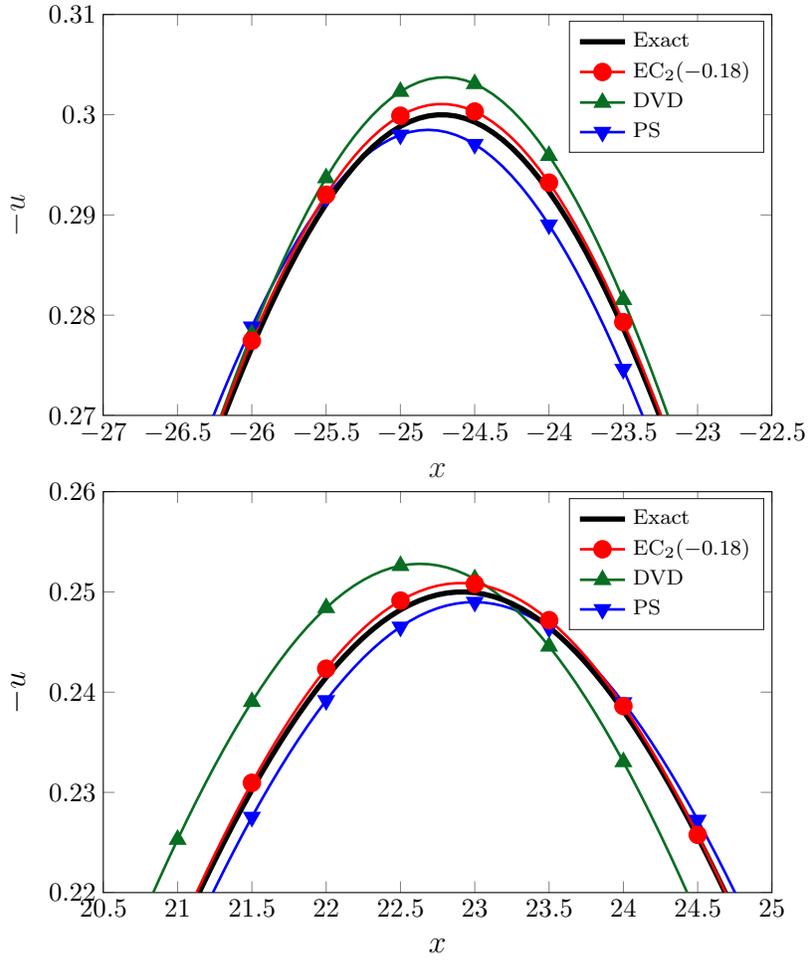}
\end{center}
\caption{Comparison of different schemes around the top of the two solitons.}
	\label{2solz}
\end{figure}
\begin{figure}[htbp]
\begin{center}
	\input{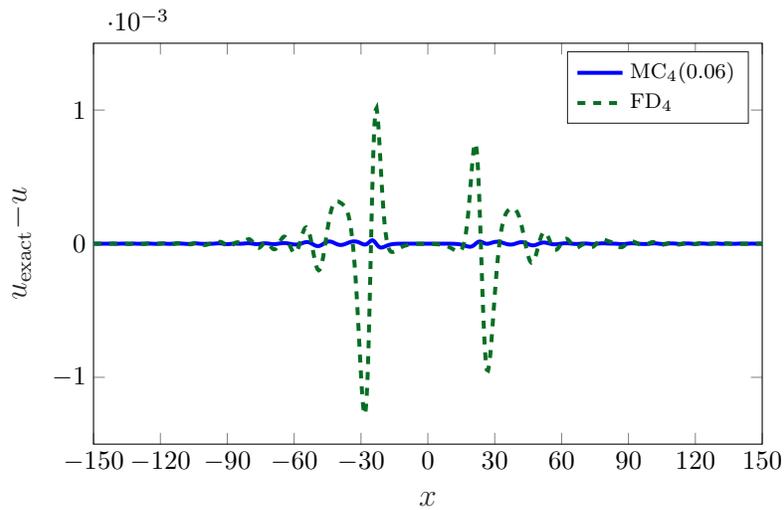}
\end{center}
\caption{Interaction of two solitons: error of fourth-order methods}
	\label{2solerr}
\end{figure}
\begin{table}[tb]
\caption{Interaction of two solitons with $\Delta x=0.5$ and $\Delta t=0.5$ (except $\text{FD}_4$).}\label{tab2}
\centerline{\begin{tabular}{cccccc}
\hline\noalign{\smallskip}
Method &  $\text{Err}_1$ & $\text{Err}_2$ & $\text{Err}_3$  & $\text{Err}_4$  & Sol. Err. \\
\noalign{\smallskip}\hline\noalign{\smallskip}
$\mbox{MC}_2(0)$	& 1.69e-14 & 7.83e-15 & 1.62e-15 & 0.0461 & 0.0257\\
$\mbox{MC}_2(-0.19)$	& 1.60e-14  & 9.44e-16 & 9.58e-16 & 0.0479 & 0.0061\\
$\mbox{EC}_2(0)$	& 1.33e-14   & 3.80e-15 & 7.32e-05 & 1.44e-15 & 0.0362 \\
$\mbox{EC}_2(-0.18)$	& 1.60e-14   & 4.86e-15 & 7.88e-05 & 1.55e-15 & 0.0057 \\
PS	& 3.11e-16 & 3.44e-05 & 2.84e-04 & 0.0519 & 0.0176\\
MP	&  1.07e-14  & 3.66e-15 & 1.55e-04 & 0.0532 & 0.0676 \\
DVD	& 1.07e-14  & 3.72e-15 & 0.0300 &  1.44e-15 & 0.0422\\
$\mbox{MC}_4(0)$	& 1.33e-14  & 4.55e-15 & 1.71e-15 &  0.0494 & 5.08e-04 \\
$\mbox{MC}_4(0.06)$	& 1.42e-14  & 4.02e-15 & 1.86e-15 &  0.0495 & 1.12e-04 \\
$\mbox{EC}_4(0)$	& 1.15e-14  & 3.16e-15 & 3.91e-07 & 1.21e-14 & 3.20e-04\\
$\mbox{EC}_4(0.02)$	& 1.15e-14  & 6.16e-15 & 3.89e-07 &  1.25e-14 & 2.56e-04\\
$\mbox{FD}_4$\,\,($\Delta t=0.5^2$)	& 2.98e-12  & 4.10e-04 & 4.23e-05 &  0.0522 & 0.0040\\
\noalign{\smallskip}\hline
\end{tabular}}
\end{table}
We obtain the initial conditions from (\ref{ex2sol}) setting
$$p_1=\frac{1}{\sqrt{6}},\qquad p_2=\frac{1}{\sqrt{5}},\qquad d_2=-d_1=20,$$
and solve this problem with $\Delta x=\Delta t=0.5$. The optimal values of the free parameters for each of the families MC$_2(\lambda_1)$, EC$_2(\lambda_2)$, MC$_4(\lambda_3)$,  EC$_4(\lambda_4)$ are $\lambda_1=-0.19,$ $\lambda_2=-0.18$, $\lambda_3=0.06$, $\lambda_4=0.02$. The results in Table~\ref{tab2} are consistent with those in Table~\ref{tab1}, and analogous remarks apply.

Figure~\ref{2sol} shows the solution of the most accurate second-order scheme, EC$_2(-0.18)$, on the whole domain $\Omega$ (upper plot) and at the final time (lower plot). The schemes MP and DVD produce oscillations (amplitude $\simeq 0.005$), where the exact solution is flat. These do not occur in the solution of EC$_2(-0.18)$.

Figure~\ref{2solz} shows how the different schemes approximate the peak of the two solitons (omitting the least accurate solution of MP). The solution of EC$_2(-0.18)$ best reproduces the speed and the amplitude of the two waves.

Finally, Figure~\ref{2solerr} compares the difference between the exact solution and the approximations given by MC$_4$(0.06) and FD$_4$ (with $\Delta t=0.5^2$). Just as for the single soliton, the error of FD$_4$ has a higher amplitude and spreads far from the final location of the two solitons.

\section{The potential Kadomtsev-–Petviashvili (pKP) equation}\label{pKPsec}
This section briefly demonstrates that the novel strategy described in Section~\ref{genericCL} is practicable for PDEs with more than two independent variables. We seek to preserve two conservation laws,
\[
D_xF_i+D_yG_i+D_tH_i=0,
\]
of the pKP equation,
\begin{equation}\label{pKP}
u_{xt}+\tfrac{3}2u_{x}u_{xx}+\tfrac{1}4u_{xxxx}+\tfrac{3}4 u_{yy}=0.
\end{equation}
\noindent\textbf{Note}:\ Throughout this section, $H_i$ and $\widetilde{H}_i$ are components of conservation laws, not Hamiltonians.

The characteristics
$\mathcal{Q}_1=1, \mathcal{Q}_2=u_x,$
correspond respectively to the conservation laws with components
\begin{align}\label{pKPcl1}
F_1=&\tfrac{3}4 u_x^2+\tfrac{1}4 u_{xxx},\quad G_1=\tfrac{3}{4}u_y,\quad H_1=u_x,\\\label{pKPcl2}
F_2=&\tfrac{1}{2}u_x^3+\tfrac{1}{4}u_xu_{xxx}-\tfrac{1}{8}u_{xx}^2+\tfrac{3}{8}uu_{yy},\quad G_2=\tfrac{3}{8}(u_yu_x-uu_{xy}),\quad H_2=\tfrac{1}{2}u_x^2.
\end{align}

We introduce a uniform grid in space with nodes $(x_m,y_n)$ and use $U_{m,n}(t)$ to denote a semidiscrete approximation of $u(x_m,y_n,t)$. In this section, $D_n$ and $\mu_n$ are the forward difference and forward average operators acting on the second index, respectively.
 
The approach in Section~\ref{genericCL} can be applied to a full 15-point rectangular stencil; this yields a wide range of families of methods. For brevity, we present here only those schemes for which all spatial derivatives are approximated on a one-dimensional spatial stencil consisting of three and five points respectively for the $y$- and $x$-derivatives. There are just two one-parameter families, both of the form
\begin{equation}\label{methpKP}
D_m \widetilde{F}_1+D_n \widetilde{G}_1 + D_t\widetilde{H}_1=0
\end{equation}
(so $\widetilde{\mathcal{Q}}_1=1$), that preserve semidiscrete versions of (\ref{pKPcl1}) and (\ref{pKPcl2}).

The first family is defined by
\begin{equation}\label{MC1pKP}
\widetilde F_1=\tfrac{3}{4}(D_m^{(c)}U_{-1,0})(D_m^{(c)}U_{0,0}) + \tfrac{1}{4}D_m^3U_{-2,0},
\quad\widetilde G_1=\tfrac{3}{4}D_nU_{0,-1},
\quad\widetilde H_1=(I+\alpha D_m^2S_m^{-1})D_m^{(c)}U_{0,0};
\end{equation}
the semidiscrete version of (\ref{pKPcl2}) is
\begin{align*}
\widetilde{\mathcal{Q}}_2=&\,\tfrac{1}{3}\Dc_m(U_{-1,0}+ U_{0,0}+ U_{1,0}),\\
\widetilde F_2=&\, (\tfrac{\alpha}{2}-\tfrac{\Delta x^2}{6})\left\{ (\Dc_m\mu_mU_{-1,0})(D_tD_m\Dc_mU_{-1,0})-(D_m\Dc_mU_{-1,0})(D_t\Dc_m\mu_mU_{-1,0})\right\}\\
& + \tfrac{1}{2}(\Dc_m\mu_mU_{-1,0})(\Dc_mU_{-1,0})(\Dc_mU_{0,0})+\tfrac{1}{12} (D_m^3U_{-2,0})D_m(U_{-2,0}+ U_{-1,0}+ U_{0,0})\\
& - \tfrac{1}{24}\{ (D_m^2U_{-2,0})^2+(D_m^2U_{-2,0})(D_m^2U_{-1,0})+(D_m^2U_{-1,0})^2 \}\\
&+\tfrac{1}{8}\mu_m\{ U_{-2,0}D_n^2U_{0,-1}+U_{0,0}D_n^2U_{-2,-1} \}+\tfrac{1}{16}\{ U_{-1,0}D_n^2U_{0,-1}+U_{0,0}D_n^2U_{-1,-1} \},\\
\widetilde G_2=&\,\tfrac{1}{8}(D_nU_{0,-1})\Dc_m\mu_n(U_{-1,-1}+ U_{0,-1}+ U_{1,-1})-\tfrac{1}{8}(\mu_nU_{0,-1})\Dc_mD_n(U_{-1,-1}+ U_{0,-1}+ U_{1,-1}),\\
\widetilde H_2=&\,\tfrac{1}{2}(\Dc_mU_{0,0})^2+\{\tfrac{\alpha}{6}\Dc_m(U_{-1,0}+ U_{1,0})+\tfrac{\alpha+\Delta x^2}{6}\Dc_mU_{0,0}\} D_m^2\Dc_mU_{-1,0}.
\end{align*}
The second family has $\widetilde{G}_1$ and $\widetilde{H}_1$ as above, together with
\begin{align}\label{MC2pKP}
\widetilde F_1= (\Dc_mU_{-1,0})(\Dc_mU_{0,0})-\tfrac{1}{4}(D_mU_{-1,0})^2 + \tfrac{1}{4}D_m^3U_{-2,0}.
\end{align}
The semidiscrete version of (\ref{pKPcl2}) is
\begin{align*}
\widetilde{\mathcal{Q}}_2=&\,\tfrac{1}{2}\Dc_m(U_{-1,0}+ U_{1,0}),\\
\widetilde F_2=&\, (\tfrac{\alpha}{2}-\tfrac{\Delta x^2}{4})\left\{ (\Dc_m\mu_mU_{-1,0})(D_tD_m\Dc_mU_{-1,0})-(D_m\Dc_mU_{-1,0})(D_t\Dc_m\mu_mU_{-1,0})\right\}\\
& + \tfrac{1}{4}(\Dc_mU_{-1,0})(\Dc_mU_{0,0})D_m(U_{-2,0}+U_{0,0})+\tfrac{1}{8} (D_m^3U_{-2,0})D_m(U_{-2,0}+ U_{0,0})\\
& - \tfrac{1}{16}\{ (D_m^2U_{-2,0})^2+(D_m^2U_{-1,0})^2 \}+\tfrac{3}{16}\mu_m\{ U_{-2,0}D_n^2U_{0,-1}+U_{0,0}D_n^2U_{-2,-1} \},\\
\widetilde G_2=&\,\tfrac{3}{16}(D_nU_{0,-1})\Dc_m\mu_n(U_{-1,-1}+ U_{1,-1})-\tfrac{3}{16}(\mu_nU_{0,-1})\Dc_mD_n(U_{-1,-1}+ U_{1,-1}),\\
\widetilde H_2=&\,\tfrac{1}{2}(\Dc_mU_{0,0})^2+\{\tfrac{\alpha}{4}\Dc_m(U_{-1,0}+ U_{1,0})+\tfrac{\Delta x^2}{4}\Dc_mU_{0,0}\} D_m^2\Dc_mU_{-1,0}.
\end{align*}

For both families, $\alpha=O(\Delta x^2,\Delta y^2).$ Let MC$_1(\alpha)$ and MC$_2(\alpha)$ denote the two families of fully-discrete schemes obtained by applying implicit midpoint in time to (\ref{methpKP}) with (\ref{MC1pKP}) and (\ref{MC2pKP}), respectively.
\subsubsection*{Numerical test}
As a brief test of the above schemes for the pKP equation, we use the following travelling wave solution of (\ref{pKP}) \cite{BK}:
\begin{equation}\label{wave}
u(x,y,t)=2\tanh(x + y - \tfrac{7}4t+5) + 2.
\end{equation}
We apply methods MC$_1(0)$ and MC$_2(0)$ to the pKP equation on the domain $\Omega=[-0.5,0.5]\times [-10,10]\times [0,5]$ with initial and Dirichlet boundary conditions given by (\ref{wave}), using step lengths $\Delta x=0.01, \Delta y=0.2$ and $\Delta t=0.05$.
\begin{figure}
\begin{center}
\includegraphics[scale=0.47]{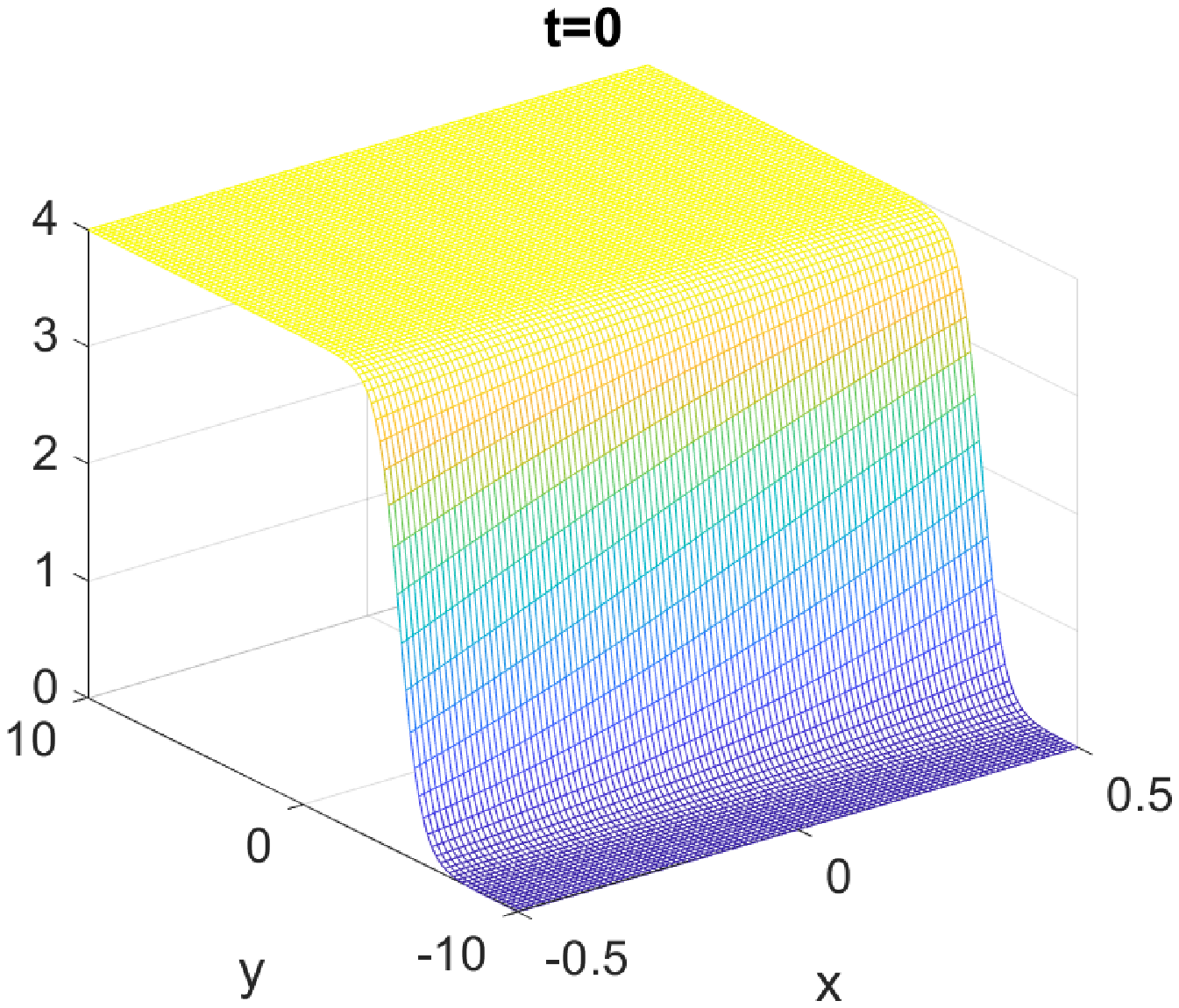}
\includegraphics[scale=0.47]{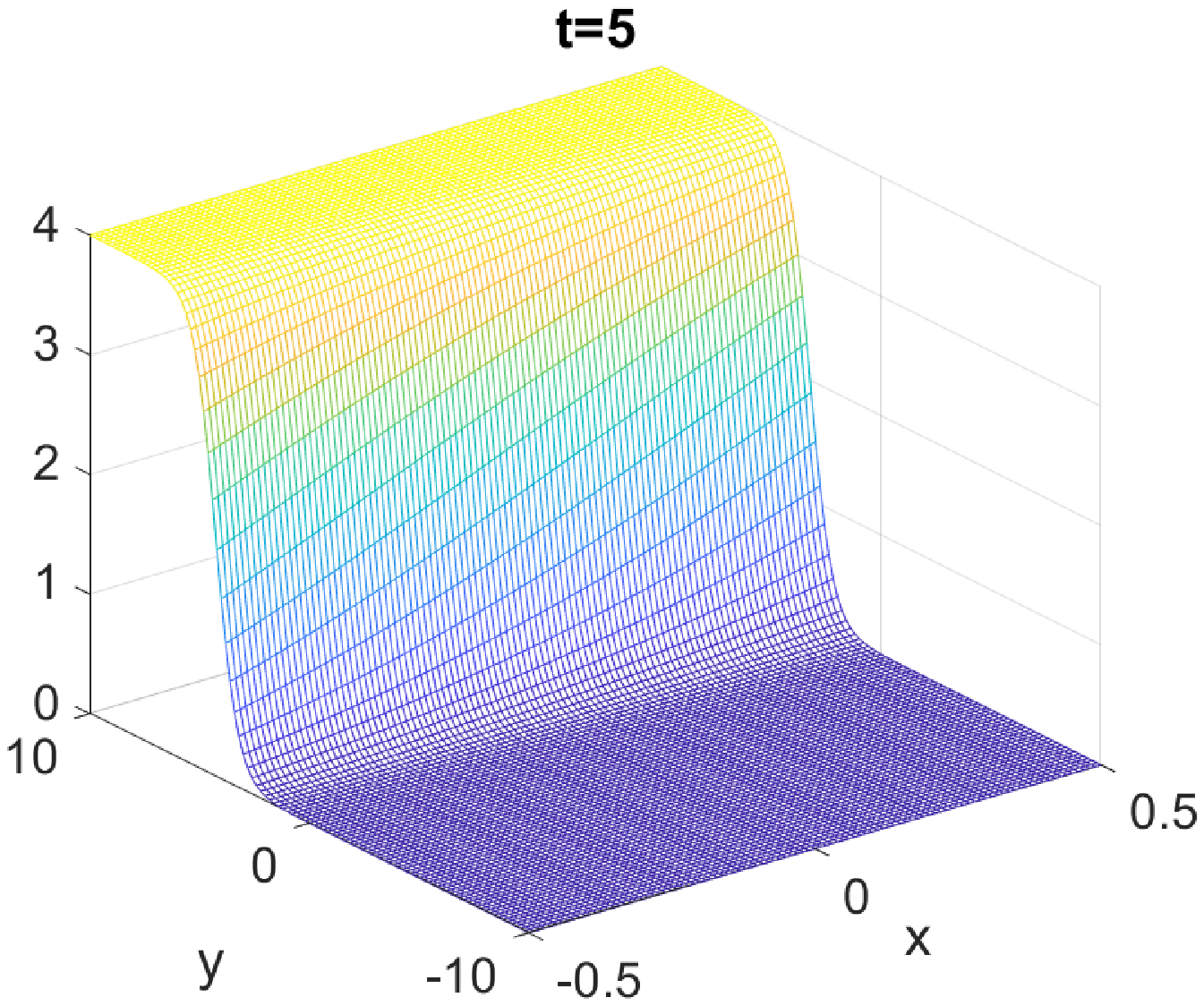}
\end{center}
\caption{Initial condition (left) and solution of MC$_1(0)$ at time $t=5$.}\label{pKPtest}
\end{figure}

Figure~\ref{pKPtest} shows the profile of the wave at the initial time and the numerical solution of MC$_1(0)$ at the final time $t=5$. Both MC$_1(0)$ and MC$_2(0)$ simulate the motion of the wave to the required accuracy, with a maximum absolute error in the solution of $3.40\times 10^{-4}$ and $3.41\times 10^{-4}$, respectively. 

\section{Conclusions}\label{concl}
In this paper we have introduced a new approach to constructing finite difference approximations to a system of PDEs that preserve multiple conservation laws. This is based on discretising one dimension at a time, using semidiscrete Euler operators to find constraints that simplify the remaining symbolic computations. This is much cheaper than the approach introduced in \cite{FCHydon}, and can be iterated to apply to PDEs with more than two independent variables. We have proved that any symplectic Runge--Kutta method preserves local conservation laws with quadratic density and that the AVF method preserves the local conservation law of the energy under milder conditions than the skew-adjointness of the discrete operator $\widetilde{\mathcal{D}}$. 

The new strategy has been applied to obtain methods that preserve either the momentum or the energy of the Boussinesq equation. These are obtained as families that depend on a number of free parameters. Numerical tests have shown that the new schemes are competitive with respect to other methods in the literature and confirmed their conservation properties.
Very accurate solutions can be obtained by selecting optimal parameter values. However, these values depend strongly on the choice of initial condition.

Finally, we have given an example that the new approach is practicable for PDEs with three independent variables, by finding two new families of schemes that preserve two conservation laws for the pKP equation.

\subsection*{Acknowledgements} The authors would like to thank the Isaac Newton Institute for Mathematical Sciences for support and hospitality during the programme {\em Geometry, Compatibility and Structure Preservation in Computational Differential Equations}, when work on this paper was undertaken. This work was supported by EPSRC grant number EP/R014604/1.

The authors are grateful to Dr. Pranav Singh (University of Bath) and the referees for their constructive suggestions which have helped to improve this paper.

\end{document}